\theoremstyle{plain}
\newtheorem{Thm}{Theorem}[section]
\newtheorem{Lem}[Thm]{Lemma}
\newtheorem{Cor}[Thm]{Corollary}
\newtheorem{Rem}[Thm]{Remark}
\newtheorem{Def}[Thm]{Definition}
\theoremstyle{definition}
\numberwithin{equation}{section}
\def\R{\mathbb{R}}
\def\N{\mathbb{N}}
\def\va{\varepsilon}
\def\di{\displaystyle\int}
\def\df{\displaystyle\frac}
\begin{document}
\thispagestyle{empty}

\title{Infinitely many solutions of a class of elliptic equations with variable exponent}
\author{Chang-Mu Chu$^{\rm 1}$\thanks{Corresponding
author. Email address: gzmychuchangmu@sina.com.},
\  Haidong Liu$^{\rm 2}$\vspace{1mm}\\
{$^{\rm 1}$\small School of Data Science and Information Engineering, Guizhou Minzu University}\\
{\small Guizhou 550025, P.R. China}\\
{$^{\rm 2}$\small College of Mathematics, Physics and Information Engineering, Jiaxing University}\\
{\small Zhejiang 314001, P.R. China}}

\date{}

\maketitle

\begin{abstract}
This paper is concerned with the $p(x)$-Laplacian equation of the form
\begin{equation}\label{eq0.1}
\left\{\begin{array}{ll}
-\Delta_{p(x)} u=Q(x)|u|^{r(x)-2}u, &\mbox{in}\ \Omega,\\
u=0, &\mbox{on}\ \partial \Omega,
\end{array}\right.
\end{equation}
where $\Omega\subset\R^N$ is a smooth bounded domain,
$1<p^-=\min_{x\in\overline{\Omega}}p(x)\leq p(x)\leq\max_{x\in\overline{\Omega}}p(x)=p^+<N$,
$1\leq r(x)<p^{*}(x)=\frac{Np(x)}{N-p(x)}$, $r^-=\min_{x\in \overline{\Omega}}r(x)<p^-$,
$r^+=\max_{x\in\overline{\Omega}}r(x)>p^+$ and $Q: \overline{\Omega}\to\R$ is a nonnegative
continuous function. We prove that \eqref{eq0.1} has infinitely many small solutions and infinitely
many large solutions by using  the Clark's theorem and the symmetric mountain pass lemma.\\
\noindent{\bf Keywords:} $p(x)$-Laplacian, variable exponent, infinitely many solutions, Clark's theorem,
symmetric mountain pass lemma.

\noindent{\bf Mathematics Subject Classification:} 35J20, 35J60, 35B33, 46E30.
\end{abstract}

\section{Introduction and main results}

\hspace*{\parindent}In recent years, the following nonlinear elliptic equation
\begin{equation}\label{eq1.1}
\left\{\begin{array}{ll}
-\Delta_{p(x)} u=f(x,u), &\mbox{in}\ \Omega,\\
u=0, &\mbox{on}\ \partial \Omega
\end{array}\right.
\end{equation}
has received considerable attention due to the fact that it can be applied to fluid mechanics and the
field of image processing (see \cite{CLR, RM}), where $\Omega\subset  \mathbb{R}^{N}$ is a smooth
bounded domain, $p: \overline{\Omega} \rightarrow \R$ is a continuous function satisfying
$1<p^-=\min_{x\in \overline{\Omega}}p(x)\leq p(x)\leq\max_{x\in \overline{\Omega}}p(x)=p^+<N $ and
$f: \overline{\Omega}\times\R \rightarrow \R$ is a suitable function.

In 2003, Fan and Zhang in \cite{FZ} gave several sufficient conditions for the existence and multiplicity
of nontrivial solutions for problem \eqref{eq1.1}. These conditions include either the sublinear growth
condition
$$|f(x,t)|\leq C\left(1+|t|^\beta\right),\ \ \mbox{for}  \ x\in\Omega\ \mbox{and}\ t\in\R$$
or Ambrosetti-Rabinowitz type superlinear  condition ($(AR)$-condition, for short)
$$f(x,t)t\geq \theta F(x,t)>0,\ \ \text{for}\ x\in\Omega\ \mbox{and}\ |t| \ \text{sufficiently large},$$
where $C>0$, $1\leq\beta<p^-$, $\theta>p^+$ and $F(x,t)=\int_0^tf(x,s)\,ds$. Subsequently, Chabrowski
and Fu in \cite{CF} discussed problem \eqref{eq1.1} in a more general setting than that in \cite{FZ}. It is
well known that $(AR)$-condition is important to guarantee the boundedness of Palais-Smale sequence
of the Euler-Lagrange functional which plays a crucial pole in applying the critical point theory. However,
it excludes many cases of nonlinearity (see \cite{JF, RV, TF, YZ, ZA, ZZ}). In fact, either the uniform
superlinear growth condition or the uniform sublinear growth condition was still imposed on $f(x,t)$. In
addition, some papers discussed problem \eqref{eq1.1} with concave-convex nonlinearities (see
\cite{GZZ, MOY, NT, YW}).

For the case $f(x,t)=Q(x)|t|^{r(x)-2}t$, problem \eqref{eq1.1} reduces to
\begin{equation}\label{eq1.2}
\left\{\begin{array}{ll}
-\Delta_{p(x)} u=Q(x)|u|^{r(x)-2}u, &\mbox{in}\ \Omega,\\
u=0, &\mbox{on}\ \partial \Omega,
\end{array}\right.
\end{equation}
where $Q, r: \overline{\Omega} \rightarrow \R$ are nonnegative continuous functions. The sets
$\Omega_0=\{x\in \Omega\,|\,r(x)=p(x)\}$, $\Omega_-=\{x\in \Omega\,|\,r(x)<p(x)\}$ and
$\Omega_+=\{x\in \Omega\,|\,r(x)>p(x)\}$ can have positive measure at the same time. This situation
is new and closely related to the existence of variable exponents since we can't meet such a phenomenon
in the constant exponent case. Mih$\check{a}$ilescu and R$\check{a}$dulescu in \cite{MR} have
considered problem \eqref{eq1.2} with $Q(x)\equiv \lambda$ under the basic assumption
$1<r^-=\min_{x\in \overline{\Omega}}r(x)<p^-<r^+=\max_{x\in \overline{\Omega}}r(x)$ and proved that
there exists $\lambda_0>0$ such that any $\lambda\in (0,\lambda_0)$ is an eigenvalue for problem
\eqref{eq1.2}. Subsequently, Fan in \cite{FX} extended the main results of \cite{MR} in the case
$\Omega=\Omega_{-}$ (but $r^+<p^-$ does not hold) and in the case $\Omega=\Omega_{+}$ (but
$r^->p^+$ does not hold), respectively. Their results implied that for any positive constant $C>0$ there
exists $u_0\in W^{1,p(x)}_0(\Omega)$ such that
$$C\di_{\Omega}|u_0|^{r(x)}dx\geq \di_{\Omega}|\nabla u_0|^{p(x)}dx.$$
Therefore, we have to overcome new difficulties in dealing with \eqref{eq1.2}.

Different from the concave-convex nonlinearities, the main feature of problem \eqref{eq1.2} is that
$Q(x)|t|^{r(x)}$ has both local superlinear growth and local sublinear growth. Due to this, it is difficult to
prove the boundedness of Palais-Smale sequence of the Euler-Lagrange functional. To the best of our
knowledge, we only realize that Aouaoui \cite{AS} obtainded at least three nontrivial solutions of problem \eqref{eq1.2} with $\Omega=\R^N$ by perturbation method. In the current paper, we are concerned with
the existence of infinitely many small solutions and infinitely many large solutions for problem \eqref{eq1.2}
under the assumption $1\leq r(x)<p^*(x)=\frac{Np(x)}{N-p(x)}$ and  $r^-<p^-\leq p^+<r^+$. The main results
of this paper read as follows.

\begin{Thm} \label{thm1.1}
Suppose that $1\leq r(x)<p^*(x)$, $r^-<p^-\leq p^+<N$, $Q: \overline{\Omega}\to\R$ is a nonnegative
continuous function and there exists a point $x_1\in\Omega^{-}=\{x\in\Omega\,|\,r(x)<p^-\}$ such that
$Q(x_1)>0$. Then problem \eqref{eq1.2} has infinitely many solutions $\{u_k\}$ with the property
$\|u_k\|_{L^\infty(\Omega)}\rightarrow 0$ as $k\rightarrow \infty$.
\end{Thm}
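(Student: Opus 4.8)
The plan is to apply Clark's theorem---in the refined form due to Kajikiya, which produces a sequence of critical points converging to zero---to a suitably truncated energy functional, and then to promote the resulting $W^{1,p(x)}_0(\Omega)$-convergence to $L^\infty(\Omega)$-convergence by a De Giorgi type iteration; this last step is what both yields the claimed decay $\|u_k\|_{L^\infty(\Omega)}\to0$ and forces the small critical points to be genuine solutions of \eqref{eq1.2}. Concretely, fix $\delta\in(0,1]$ and set $\wt f(x,t)=Q(x)|t|^{r(x)-2}t$ for $|t|\le\delta$ and $\wt f(x,t)=Q(x)\delta^{r(x)-1}\operatorname{sgn}(t)$ for $|t|>\delta$. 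Then $\wt f$ is continuous on $\overline\Omega\times\R$, odd in $t$, and (using $r^-\ge1$ and $\delta\le1$) satisfies the uniform subcritical bound $|\wt f(x,t)|\le\|Q\|_{L^\infty}\,|t|^{r^--1}$ on $\overline\Omega\times\R$, with $\wt f(x,0)\equiv0$. With $\wt F(x,t)=\di_0^t\wt f(x,s)\,ds$ (so $0\le\wt F(x,t)\le C_\delta|t|$) put
$$\wt\Phi(u)=\di_\Omega\df{1}{p(x)}|\nabla u|^{p(x)}\,dx-\di_\Omega\wt F(x,u)\,dx,\qquad u\in X:=W^{1,p(x)}_0(\Omega).$$

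I would first record that $\wt\Phi\in C^1(X,\R)$ is even with $\wt\Phi(0)=0$, and that it is coercive and bounded below: the standard modular/norm inequalities on $W^{1,p(x)}_0(\Omega)$ together with $X\hookrightarrow L^1(\Omega)$ give $\wt\Phi(u)\ge\frac1{p^+}\|u\|_X^{p^-}-C\|u\|_X$ once $\|u\|_X\ge1$, and $p^->1$. Hence every Palais--Smale sequence is bounded; passing to a subsequence, $u_n\rightharpoonup u$ in $X$ and $u_n\to u$ in $L^1(\Omega)$ by compact embedding. Decomposing $\langle\wt\Phi'(u_n),u_n-u\rangle$ into the $p(x)$-Laplacian part and $\di_\Omega\wt f(x,u_n)(u_n-u)\,dx$, using that $\{\wt f(\cdot,u_n)\}$ is bounded in $L^\infty(\Omega)$ while $u_n-u\to0$ in $L^1(\Omega)$, and invoking the $(S_+)$ property of the $p(x)$-Laplacian operator, one concludes $u_n\to u$ in $X$; thus $\wt\Phi$ satisfies $(PS)$.

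Next comes the symmetric ``small sphere'' condition, which is where the hypothesis on $x_1$ is used. By continuity of $Q$ and $r$, choose a ball $B=B_\sigma(x_1)\subset\Omega$ and constants $a>0$, $b<p^-$ with $Q\ge a$ and $r(x)\le b$ on $B$. For each $k\in\N$ fix a $k$-dimensional subspace $V_k\subset C_c^\infty(B)$; on $V_k$ all norms are equivalent, so there is $\rho_k\in(0,1)$ with $\|u\|_{L^\infty}\le\delta$ for every $u\in V_k$ with $\|u\|_X=\rho_k$. For such $u$ the truncation is inactive, so $\wt F(x,u)=\frac{Q(x)}{r(x)}|u|^{r(x)}$ and
$$\wt\Phi(u)\le\df{1}{p^-}\di_\Omega|\nabla u|^{p(x)}\,dx-\df{a}{r^+}\di_B|u|^{r(x)}\,dx\le\df{1}{p^-}\|u\|_X^{p^-}-c_k\|u\|_X^{b}$$
for some $c_k>0$ (again by equivalence of norms on $V_k$ and the modular/norm inequalities). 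Since $b<p^-$, after shrinking $\rho_k$ we get $\sup\{\wt\Phi(u):u\in V_k,\ \|u\|_X=\rho_k\}<0$, and the sphere $A_k=\{u\in V_k:\|u\|_X=\rho_k\}$ is closed, symmetric, and of Krasnoselskii genus $k$.

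By the Kajikiya form of Clark's theorem, $\wt\Phi$ then possesses a sequence of critical points $u_k\ne0$ with $\wt\Phi(u_k)<0$ and $u_k\to0$ in $X$; these are infinitely many distinct functions, since a nonzero constant subsequence cannot converge to $0$. Each $u_k$ solves $-\Delta_{p(x)}u=\wt f(x,u)$ weakly, and because $|\wt f(x,t)|\le C|t|^{r^--1}$ has subcritical growth (indeed $1\le r^-<p^-\le p(x)<p^*(x)$) with $\wt f(x,0)\equiv0$, a standard De Giorgi/Moser iteration ($L^\infty$-regularity theory for $p(x)$-Laplacian equations with subcritical right-hand side) yields, for $\|u_k\|_X\le1$, an estimate $\|u_k\|_{L^\infty(\Omega)}\le C\|u_k\|_X^{\theta}$ with some $\theta>0$; hence $\|u_k\|_{L^\infty(\Omega)}\to0$. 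In particular $\|u_k\|_{L^\infty(\Omega)}\le\delta$ for all large $k$, and for those indices $\wt f(x,u_k)=Q(x)|u_k|^{r(x)-2}u_k$ a.e., so $u_k$ solves \eqref{eq1.2}; relabelling gives the asserted infinitely many solutions. The main obstacle is precisely this last estimate: it must deliver a positive power of $\|u_k\|_X$ with no additive constant, so that smallness in $X$ transfers to smallness in $L^\infty$ and switches off the truncation---whereas the $(PS)$ verification via $(S_+)$ and the genus condition at $x_1$ are comparatively routine.
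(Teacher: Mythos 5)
Your overall strategy is the paper's: truncate the nonlinearity so that it becomes bounded, verify evenness, boundedness below and $(PS)$ for the truncated functional, produce negative-energy symmetric sets of arbitrary genus inside a finite-dimensional space of functions supported near $x_1$ (using $r\le b<p^-$ and $Q\ge a$ there), invoke a Clark-type theorem (Kajikiya's version in your case, the Liu--Wang variant in the paper) to get critical points $u_k\to 0$ in $X=W^{1,p(x)}_0(\Omega)$, and finally upgrade to $L^\infty$-smallness so the truncation is switched off. Up to and including the genus construction your argument is sound and matches the paper step by step.

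The gap is exactly where you say the main obstacle lies: the passage from $\|u_k\|\to 0$ to $\|u_k\|_{L^\infty(\Omega)}\to 0$. You assert a quantitative estimate $\|u_k\|_{L^\infty(\Omega)}\le C\|u_k\|^{\theta}$, $\theta>0$, ``by a standard De Giorgi/Moser iteration'' based solely on the growth bound $|\wt f(x,t)|\le \|Q\|_{L^\infty}|t|^{r^--1}$. This does not work as stated. First, the theorem allows $r^-=1$ (the hypothesis is only $1\le r(x)$, $r^-<p^-$), in which case your bound degenerates to $|\wt f|\le \|Q\|_{L^\infty}$, a constant that does not vanish as $t\to 0$; from such a bound the iteration yields only a uniform $L^\infty$ bound, never an estimate of the form $C\|u_k\|^{\theta}$ without an additive constant, so smallness in $X$ does not transfer. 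Second, even when $r^->1$, a homogeneous power estimate of this kind for the $p(x)$-Laplacian is not a citable off-the-shelf result: the iteration with variable exponents involves modular--norm conversions at each step and must be carried out keeping all constants free of additive terms, which is precisely the nontrivial content you would have to supply. The paper sidesteps the quantitative estimate entirely by a soft compactness argument: since the truncated nonlinearity is bounded, Fan's global regularity result (cited as \cite{FX2}) gives a uniform bound for the solutions $u_k$ in $C^{1,\mu}(\overline{\Omega})$ with $\mu$ independent of $k$; the compact embedding of $C^{1,\mu}(\overline{\Omega})$ into $C^1(\overline{\Omega})$ then shows that every subsequence has a further subsequence converging in $C^1(\overline{\Omega})$, and since $u_k\to 0$ in $X$ the limit must be $0$, so the whole sequence converges to $0$ in $C^1(\overline{\Omega})$, in particular in $L^\infty(\Omega)$. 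Replacing your claimed power estimate by this regularity-plus-compactness argument (or by an actual proof of a quantitative estimate valid also when $r^-=1$, which would require using more than the growth bound) is what is needed to close the proof.
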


\begin{Thm} \label{thm1.2}
Suppose that $1<p^-\leq p^+<N$, $1\leq r(x)<p^*(x)$, $r^+>p^+$, $Q: \overline{\Omega}\to\R$ is a
nonnegative continuous function and there exists a point $x_2\in\Omega^{+}=\{x\in\Omega\,|\,r(x)>p^+\}$
such that $Q(x_2)>0$.  Either $r^->p^+$, or $1< r^-\leq p^+$ and there exists $\varepsilon>0$ such that
$Q(x)\equiv 0$ in $\Omega_{\varepsilon}=\{x\in\Omega\,|\, p^--\varepsilon<r(x)< p^++\varepsilon\}$.
Then problem \eqref{eq1.2} has infinitely many  solutions $\{v_k\}$  such that $\|v_k\|\rightarrow \infty$
as $k\rightarrow \infty$.
\end{Thm}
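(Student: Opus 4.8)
The plan is to treat \eqref{eq1.2} variationally on $X:=W^{1,p(x)}_0(\Omega)$ with energy functional
$$J(u)=\di_\Omega\df{1}{p(x)}|\nabla u|^{p(x)}\,dx-\di_\Omega\df{Q(x)}{r(x)}|u|^{r(x)}\,dx .$$
Since $r^->1$ in both alternatives and $r(x)<p^*(x)$, the standard theory of $L^{p(x)}$ and $W^{1,p(x)}_0$ shows $J\in C^1(X,\R)$, that $J$ is even with $J(0)=0$, and that critical points of $J$ are exactly the weak solutions of \eqref{eq1.2}. Because we want solutions with $\|v_k\|\to\infty$, I would obtain an unbounded sequence of critical values from the symmetric mountain pass lemma in its fountain-theorem form: fix a Schauder basis $\{e_j\}_{j\ge1}$ of the separable reflexive space $X$ and set $Y_k=\bigoplus_{j=1}^{k}\R e_j$, $Z_k=\overline{\bigoplus_{j\ge k}\R e_j}$; it then suffices to verify that $J$ satisfies the $(PS)_c$ condition for every $c\in\R$ and that there exist $0<\gamma_k<\rho_k$ with
$$\max_{u\in Y_k,\ \|u\|=\rho_k}J(u)\le 0\qquad\text{and}\qquad b_k:=\inf_{u\in Z_k,\ \|u\|=\gamma_k}J(u)\to+\infty .$$

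The main obstacle is the Palais--Smale condition, since the mixed sub-/superlinear behaviour of $Q(x)|u|^{r(x)}$ blocks the classical $(AR)$ argument, and this is precisely where the dichotomy in the hypotheses is used. Let $\{u_n\}$ be a $(PS)_c$ sequence. If $r^->p^+$, I would form $J(u_n)-\tfrac1{r^-}\langle J'(u_n),u_n\rangle$: both integrands are nonnegative (because $\tfrac1{p(x)}-\tfrac1{r^-}>0$ and $\tfrac1{r^-}-\tfrac1{r(x)}\ge0$), so $c+o(1)+o(\|u_n\|)\ge(\tfrac1{p^+}-\tfrac1{r^-})\int_\Omega|\nabla u_n|^{p(x)}\,dx$, and the modular--norm inequalities force $\{u_n\}$ to be bounded. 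If instead $1<r^-\le p^+$ and $Q\equiv0$ on $\Omega_\varepsilon$, put $\beta=p^++\varepsilon$; then on $\{Q>0\}$ one has $r(x)\le p^--\varepsilon$ or $r(x)\ge\beta$, so $J(u_n)-\tfrac1\beta\langle J'(u_n),u_n\rangle$ still dominates $c_1\int_\Omega|\nabla u_n|^{p(x)}\,dx$ up to the term $\tfrac{\|Q\|_\infty}{r^-}\int_{\{r(x)\le p^--\varepsilon\}}|u_n|^{r(x)}\,dx\le C\bigl(1+\|u_n\|^{p^--\varepsilon}\bigr)$, using the continuous embedding $X\hookrightarrow L^{p^--\varepsilon}(\Omega)$; dividing by $\|u_n\|^{p^-}$ again gives boundedness since $p^->p^--\varepsilon$ and $p^->1$. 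In either case a bounded $(PS)_c$ sequence has a weakly convergent subsequence, and combining the compact embedding $X\hookrightarrow L^{r(x)}(\Omega)$ with the $(S_+)$ property of $-\Delta_{p(x)}$ upgrades this to strong convergence; hence $(PS)_c$ holds for all $c$.

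For the geometry, the estimate on $Z_k$ is routine: with $\beta_k:=\sup\{|u|_{L^{r(x)}(\Omega)}:u\in Z_k,\ \|u\|=1\}\to0$ (a consequence of the compact embedding), for $u\in Z_k$ with $\|u\|=\gamma_k\ge1$ one gets $J(u)\ge\tfrac1{p^+}\gamma_k^{p^-}-\tfrac{\|Q\|_\infty}{r^-}(\beta_k\gamma_k)^{r^+}$, and the choice $\gamma_k\sim\beta_k^{-r^+/(r^+-p^-)}$ (legitimate since $r^+>p^+\ge p^-$) yields $b_k\ge c_2\gamma_k^{p^-}\to+\infty$. For the estimate on $Y_k$, continuity of $Q$ together with $Q(x_2)>0$ and $x_2\in\Omega^+$ (which in the second alternative forces $r(x_2)\ge p^++\varepsilon$) gives a ball $D=B_\rho(x_2)\subset\Omega$ and constants $q_0>0$, $\mu>p^+$ with $Q\ge q_0$ and $r\ge\mu$ on $D$, so that $\int_\Omega\df{Q(x)}{r(x)}|u|^{r(x)}\,dx\ge\tfrac{q_0}{r^+}\bigl(\int_D|u|^\mu\,dx-|D|\bigr)$. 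Here is the one genuinely delicate point besides $(PS)_c$: since $Q$ may vanish on an open subset of $\Omega$, I would choose the basis $\{e_j\}$ so that $\{e_j|_D\}$ is linearly independent in $L^\mu(D)$ — possible while keeping $\overline{\bigcup_k Y_k}=X$, because $C_c^\infty(D)\hookrightarrow X$ has infinite-dimensional image in $L^\mu(D)$ — so that $u\mapsto\bigl(\int_D|u|^\mu\,dx\bigr)^{1/\mu}$ is a norm on the finite-dimensional $Y_k$, equivalent to $\|\cdot\|$; since $\mu>p^+$ this gives $J(u)\to-\infty$ on $Y_k$ as $\|u\|\to\infty$, and some $\rho_k>\gamma_k$ then works. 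The fountain theorem now yields critical values $c_k\ge b_k\to+\infty$; the corresponding critical points $v_k$ are nontrivial ($J(0)=0<b_k\le c_k$ for large $k$) and satisfy $\|v_k\|\to\infty$ because $J$ maps bounded sets to bounded sets.
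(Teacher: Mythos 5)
Your variational setup, your Palais--Smale argument (the dichotomy $J-\tfrac1{r^-}\langle J',\cdot\rangle$ when $r^->p^+$ versus $J-\tfrac1{p^++\varepsilon}\langle J',\cdot\rangle$ combined with $Q\equiv0$ on $\Omega_\varepsilon$ and the embedding into $L^{p^--\varepsilon}$), your tail estimate via $\beta_k\to0$, and your final observation that $c_k\to\infty$ forces $\|v_k\|\to\infty$ all run parallel to the paper's Lemmas 4.1, 4.4 and 4.8 and are essentially correct (your use of the $L^{r(x)}$-norm in the tail estimate is in fact cleaner than working with the constant exponent $r^+$).

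The gap is exactly at the point you flag as ``delicate'': the $Y_k$-geometry. The fountain theorem forces the head spaces $Y_k$ and the tail spaces $Z_k$ to come from one and the same topological decomposition of $X$, so you need a single system $\{e_j\}$ that simultaneously (a) has the structure required for the $Z_k$-estimate (total biorthogonal functionals, or a genuine Schauder basis, so that unit vectors in $Z_k$ go weakly to $0$ and the intersection lemma behind the fountain theorem applies) and (b) has all finite sections restricting injectively into $L^\mu(D)$. You assert that such a basis exists ``because $C_c^\infty(D)\hookrightarrow X$ has infinite-dimensional image in $L^\mu(D)$'', but this only shows that individual vectors can be perturbed; it does not show that the perturbed system is still a Schauder (or Markushevich) basis, which is what (a) needs --- and even the opening claim ``fix a Schauder basis of the separable reflexive space $X$'' is unjustified, since separable reflexive spaces need not have one. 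Making this rigorous requires a genuine construction (e.g.\ a Krein--Milman--Rutman small-perturbation argument on a basis, if one exists for $W^{1,p(x)}_0(\Omega)$), none of which is supplied. The paper avoids the whole issue by decoupling the two families: the finite-dimensional minimax sets are spanned by $k$ disjointly supported bumps inside a ball $\Omega_2$ where $Q\geq Q_2>0$ and $r\geq r_2>p^+$ (so negativity at large radius is immediate), while the lower bound for $c_k$ comes from a genus-based intersection lemma (Lemma 4.7, \`a la Rabinowitz/Kajikiya) pairing these sets with the tails of an arbitrary biorthogonal system; no compatibility between the bump spaces and the basis is ever needed. Either adopt that symmetric mountain pass/genus framework, or supply the missing construction of the special basis; as written, the fountain-theorem step does not go through.
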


As a corollary of Theorems 1.1 and 1.2, we have

\begin{Cor} \label{cor1.3}
Suppose that $1<p^-\leq p^+<N$, $1\leq r(x)<p^*(x)$, $r^-<p^-$, $r^+>p^+$, $Q(x)$ is a nonnegative
continuous function and there exist $\varepsilon>0$,  $x_1\in\Omega^{-}=\{x\in\Omega\,|\, r(x)<p^-\}$,
$x_2\in\Omega^{+}=\{x\in\Omega\,|\,r(x)>p^+\}$ such that $Q(x_1),\ Q(x_2)>0$ and $Q(x)\equiv 0$ in
$\Omega_{\varepsilon}=\{x\in\Omega\,|\, p^--\varepsilon<r(x)< p^++\varepsilon\}$. Then problem
\eqref{eq1.2} has  infinitely many small solutions $\{u_k\}$ and infinitely many large solutions $\{v_k\}$.
\end{Cor}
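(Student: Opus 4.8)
The plan is to derive Corollary~\ref{cor1.3} directly from Theorems~\ref{thm1.1} and~\ref{thm1.2}: the infinitely many small solutions will come from Theorem~\ref{thm1.1} and the infinitely many large solutions from Theorem~\ref{thm1.2}, so the whole proof reduces to checking that the hypotheses of Corollary~\ref{cor1.3} are strong enough to invoke both theorems at once. No new analysis is involved; the only thing to keep track of is which branch of the alternative in Theorem~\ref{thm1.2} is being used.

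First I would extract the small solutions. Under the hypotheses of Corollary~\ref{cor1.3} we have $1\leq r(x)<p^*(x)$, and combining $1<p^-\leq p^+<N$ with $r^-<p^-$ gives $r^-<p^-\leq p^+<N$; moreover $Q$ is nonnegative and continuous, and by assumption there is a point $x_1\in\Omega^-=\{x\in\Omega\,|\,r(x)<p^-\}$ with $Q(x_1)>0$. These are exactly the hypotheses of Theorem~\ref{thm1.1}, which therefore produces a sequence $\{u_k\}$ of solutions of \eqref{eq1.2} with $\|u_k\|_{L^\infty(\Omega)}\to 0$ as $k\to\infty$, i.e. infinitely many small solutions.

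Next I would extract the large solutions via Theorem~\ref{thm1.2}. The requirements $1<p^-\leq p^+<N$, $1\leq r(x)<p^*(x)$, $r^+>p^+$, nonnegativity and continuity of $Q$, and the existence of $x_2\in\Omega^+=\{x\in\Omega\,|\,r(x)>p^+\}$ with $Q(x_2)>0$ are all explicitly part of the hypotheses of Corollary~\ref{cor1.3}. It remains only to settle the dichotomy in Theorem~\ref{thm1.2}: since $r^-<p^-\leq p^+$, the first alternative $r^->p^+$ is impossible, so we are forced into the second alternative, which in addition demands some $\varepsilon>0$ with $Q\equiv 0$ on $\Omega_\varepsilon=\{x\in\Omega\,|\,p^--\varepsilon<r(x)<p^++\varepsilon\}$ --- and this is precisely the vanishing hypothesis imposed in Corollary~\ref{cor1.3}. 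Hence Theorem~\ref{thm1.2} applies and yields a sequence $\{v_k\}$ of solutions of \eqref{eq1.2} with $\|v_k\|\to\infty$, i.e. infinitely many large solutions.

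I do not expect any genuine obstacle here: the statement is a bookkeeping consequence of the two main theorems, and the combined list of hypotheses in Corollary~\ref{cor1.3} was arranged exactly so that both become simultaneously applicable. The one point worth a moment's care is that the region on which $Q$ is required to vanish in Corollary~\ref{cor1.3} must coincide with the region demanded by the sublinear branch of Theorem~\ref{thm1.2}, which it does; and one should note in passing that the two conclusions are not redundant, since $\|u_k\|_{L^\infty(\Omega)}\to 0$ while $\|v_k\|\to\infty$.
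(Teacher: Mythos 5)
Your proposal is correct and is exactly the paper's (implicit) argument: the paper offers no separate proof, stating the corollary directly as a consequence of Theorems \ref{thm1.1} and \ref{thm1.2}, with the hypotheses arranged precisely so that Theorem \ref{thm1.1} gives the small solutions and the second branch of Theorem \ref{thm1.2} (forced, as you note, since $r^-<p^-\leq p^+$ rules out $r^->p^+$) gives the large ones. The only microscopic point you could have flagged --- present in the paper's statement as well --- is that the second alternative of Theorem \ref{thm1.2} formally asks for $1<r^-$, while the corollary only guarantees $r^-\geq 1$.
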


In this paper, the letters $C$ and $C_j$ stand for positive constants. $\|u\|_s$ denotes the standard norms
of $L^s(\Omega)\ (s\geq 1)$. The paper is organized as follows. In Section 2, we give some basic properties
of the variable exponent Lebesgue space and Sobolev space. In Sections 3 and 4, we prove Theorems
\ref{thm1.1} and \ref{thm1.2} by the Clark's theorem and the symmetric mountain pass lemma, respectively.

\section{Preliminaries}

\hspace*{\parindent}The variable exponent Lebesgue space $L^{p(x)}(\Omega)$ is defined by
$$L^{p(x)}(\Omega)=\left\{u\ |\ u: \Omega\rightarrow \R\ \text{is measurable},
\ \di_{\Omega}| u|^{p(x)}\,dx<\infty\right\}$$
with the norm
$$|u|_{p(x)}=\inf\left\{\lambda>0\ \Big|\,\di_{\Omega}\left| \df{u}{\lambda}\right|^{p(x)}dx\leq 1\right\}.$$
The variable exponent Sobolev space $W^{1, p(x)}(\Omega)$ is defined by
$$W^{1,p(x)}(\Omega)=\left\{u\in L^{p(x)}(\Omega)\ |\ |\nabla u|\in  L^{p(x)}(\Omega)\right\}$$
with the norm
$$\|u\|_{1,p(x)}=|u|_{p(x)}+|\nabla u|_{p(x)}.$$
Define $W^{1,p(x)}_0(\Omega)$ as the closure of $C^{\infty}_0(\Omega)$ in $W^{1,p(x)}(\Omega)$.
The spaces $L^{p(x)}(\Omega)$, $W^{1,p(x)}(\Omega)$ and $W^{1,p(x)}_0(\Omega)$ are separable
and reflexive Banach spaces if $1<p^-\leq p^+<\infty$ (see \cite{FZ}). Moreover, there is a constant
$C>0$ such that
$$|u|_{p(x)}\leq C|\nabla u|_{p(x)}, \ \ \text{for any}\ u\in W^{1,p(x)}_0(\Omega).$$
Therefore, $\|u\|=|\nabla u|_{p(x)}$ and $\|u\|_{1,p(x)}$  are equivalent norms on $W^{1,p(x)}_0(\Omega)$.
We will use $\|u\|$ to replace $\|u\|_{1,p(x)}$ in the following discussions.

\begin{Lem}  (\cite{FZ})\label{lem2.1}
If $q \in C(\overline{\Omega})$ satisfies $1\leq q(x)<p^{*}(x)$ for $x\in \overline{\Omega}$, then the
imbedding from $W^{1,p(x)}(\Omega)$ to $L^{q(x)}(\Omega)$ is compact and continuous.
\end{Lem}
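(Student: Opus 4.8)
The plan is to deduce this compact embedding from the classical Rellich--Kondrachov theorem, by covering $\overline{\Omega}$ with finitely many pieces on which $p$ and $q$ are almost constant and then patching the resulting local compact embeddings together by means of the modular $\rho_{q(x)}(u)=\di_{\Omega}|u|^{q(x)}\,dx$. \textbf{Localization.} First I would note that, since $p$ is continuous on $\overline{\Omega}$ and $p^+<N$, the Sobolev conjugate $p^*(x)=\df{Np(x)}{N-p(x)}$ is continuous on $\overline{\Omega}$, while $q(x)<p^*(x)$ there by hypothesis. Given $x_0\in\overline{\Omega}$ I would pick $s_{x_0}$ with $q(x_0)<s_{x_0}<p^*(x_0)$ and then, using continuity, choose a ball $B_{x_0}$ about $x_0$ so small that $\Omega_{x_0}:=B_{x_0}\cap\Omega$ is a bounded Lipschitz domain (here the smoothness of $\partial\Omega$ enters) and, with $p^-_{x_0}=\min_{\overline{\Omega_{x_0}}}p$, $q^+_{x_0}=\max_{\overline{\Omega_{x_0}}}q$, one has $q^+_{x_0}<s_{x_0}<(p^-_{x_0})^*$. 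A finite subcover $\overline{\Omega}\subset\bigcup_{j=1}^{m}B_{x_j}$ then gives domains $\Omega_j:=\Omega_{x_j}$ with constants $p_j:=p^-_{x_j}$, $q_j:=q^+_{x_j}$ satisfying $1\le q_j<(p_j)^*$, and I would fix a finite measurable partition $\Omega=\bigcup_{j=1}^{m}V_j$ with $V_j\subset\Omega_j$.

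\textbf{Local compact embeddings.} On the bounded domain $\Omega_j$ the pointwise bound $p(x)\ge p_j$ gives a continuous inclusion $L^{p(x)}(\Omega_j)\hookrightarrow L^{p_j}(\Omega_j)$, hence the restriction map $W^{1,p(x)}(\Omega)\to W^{1,p_j}(\Omega_j)$ is bounded; since $\Omega_j$ is a bounded Lipschitz domain and $1\le q_j<(p_j)^*$, the classical Rellich--Kondrachov theorem yields a compact embedding $W^{1,p_j}(\Omega_j)\hookrightarrow L^{q_j}(\Omega_j)$. Composing, $W^{1,p(x)}(\Omega)\to L^{q_j}(\Omega_j)$ is bounded and compact for each $j$.

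\textbf{Patching.} Because $1\le q(x)\le q_j$ on $\overline{V_j}$, splitting $V_j$ according to $|u|\le 1$ or $|u|>1$ and using $L^{q_j}(V_j)\hookrightarrow L^1(V_j)$ gives, for every $u\in W^{1,p(x)}(\Omega)$,
$$\rho_{q(x)}(u)=\sum_{j=1}^{m}\di_{V_j}|u|^{q(x)}\,dx\le\sum_{j=1}^{m}\Bigl(\,\|u\|_{L^1(\Omega_j)}+\|u\|_{L^{q_j}(\Omega_j)}^{q_j}\Bigr)\le C\Bigl(1+\sum_{j=1}^{m}\|u\|_{1,p(x)}^{q_j}\Bigr),$$
so by the standard relation between the modular and the Luxemburg norm in $L^{q(x)}(\Omega)$ (see \cite{FZ}), together with a scaling argument, $|u|_{q(x)}\le C\|u\|_{1,p(x)}$, i.e.\ the embedding is continuous. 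For compactness I would take $(u_n)$ bounded in $W^{1,p(x)}(\Omega)$, use the previous step for $j=1,\dots,m$ with a diagonal extraction to pass to a subsequence (not relabelled) that is Cauchy in $L^{q_j}(\Omega_j)$ for every $j$, and then apply the same splitting to $u_n-u_l$ to get
$$\rho_{q(x)}(u_n-u_l)\le\sum_{j=1}^{m}\Bigl(\,\|u_n-u_l\|_{L^1(\Omega_j)}+\|u_n-u_l\|_{L^{q_j}(\Omega_j)}^{q_j}\Bigr)\longrightarrow 0\quad(n,l\to\infty),$$
whence $|u_n-u_l|_{q(x)}\to 0$ and $(u_n)$ converges in $L^{q(x)}(\Omega)$.

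\textbf{Main obstacle.} I expect the two points requiring care to be: invoking the smoothness of $\partial\Omega$ so that the boundary pieces $B_{x_j}\cap\Omega$ are Lipschitz (extension) domains on which the constant-exponent Rellich--Kondrachov theorem applies; and the bookkeeping that passes from estimates on the modular $\di_{\Omega}|\cdot|^{q(x)}\,dx$ to estimates on the norm $|\cdot|_{q(x)}$. The covering in the first step is routine, since the strict inequality $q(x)<p^*(x)$ on the compact set $\overline{\Omega}$ automatically supplies the uniform local room $q^+_j<(p^-_j)^*$ needed for the classical embedding.
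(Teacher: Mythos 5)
Your proposal addresses a statement the paper itself does not prove: Lemma \ref{lem2.1} is quoted from \cite{FZ} as a known property of the spaces $W^{1,p(x)}(\Omega)$, so there is no in-paper argument to compare against. Judged on its own, your localization argument is correct in outline and is essentially the classical proof of this embedding in the variable-exponent literature (Kov\'a\v{c}ik--R\'akosn\'{\i}k, Fan--Zhao): the strict inequality $q(x)<p^{*}(x)$ on the compact set $\overline{\Omega}$, together with continuity of $p$, $q$ and $p^{*}$, does yield finitely many pieces $\Omega_j$ with constant exponents $1\le q_j<(p_j)^{*}$; the inclusion $L^{p(x)}(\Omega_j)\hookrightarrow L^{p_j}(\Omega_j)$ on a set of finite measure makes the restriction $W^{1,p(x)}(\Omega)\to W^{1,p_j}(\Omega_j)$ bounded; Rellich--Kondrachov applies on each piece; and your splitting of the modular according to $|u|\le 1$ (using $q(x)\ge 1$) and $|u|>1$ (using $q(x)\le q_j$), combined with Lemma \ref{lem2.2} (in its $q(x)$-version) and the homogeneity of the Luxemburg norm, correctly converts the modular estimates into norm continuity and, applied to $u_n-u_l$, into convergence of the extracted subsequence in $L^{q(x)}(\Omega)$. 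The two caveats you flag are the only genuine technical points: you must actually arrange the covering so that each boundary piece $B_{x_j}\cap\Omega$ is a bounded Lipschitz (extension) domain --- for a smooth $\partial\Omega$ this holds for sufficiently small balls centered on $\overline{\Omega}$, since a small sphere centered at a boundary point meets the smooth boundary transversally --- and the modular-to-norm passage should be written out via the unit-ball property ($\rho(u)\le 1$ iff $|u|_{q(x)}\le 1$, applied to $u/\|u\|_{1,p(x)}$). With those details supplied, the proof is complete and consistent with the cited source.
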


\begin{Lem} (\cite{FZ, FX3}) \label{lem2.2}
Set
$$\rho(u)=\int_{\Omega}| u|^{p(x)}\,dx,\ \ \mbox{for}\ u\in L^{p(x)}(\Omega).$$
If $u\in L^{p(x)}(\Omega)$ and $\{u_k\}_{k\in\mathbb{N}}\subset L^{p(x)}(\Omega)$, then we have\\
(i) $|u|_{p(x)}<1\ (=1;\ >1)\Leftrightarrow \rho(u)<1\ (=1;\ >1)$;\\
(ii) $|u|_{p(x)}>1\Rightarrow |u|_{p(x)}^{p^-}\leq\rho(u)\leq |u|_{p(x)}^{p^+}$;\\
(iii) $|u|_{p(x)}<1\Rightarrow |u|_{p(x)}^{p^+}\leq\rho(u)\leq |u|_{p(x)}^{p^-}$;\\
(iv) $\lim_{k\rightarrow\infty}|u_k-u|_{p(x)}=0\Leftrightarrow \lim_{k\rightarrow\infty}\rho(u_k-u)=0
\Leftrightarrow u_k\to u\ \mbox{in measure in}\ \Omega\ \mbox{and}\\
\lim_{k\rightarrow\infty}\rho(u_k)=\rho(u)$.
\end{Lem}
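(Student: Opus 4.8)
\emph{Proof proposal.} The plan is to reduce everything to one fundamental identity and then do bookkeeping. Since $p^+<\infty$, for every nonzero $u\in L^{p(x)}(\Omega)$ one has $\rho\!\left(u/|u|_{p(x)}\right)=1$, which I would establish first: because $\lambda^{-p(x)}\leq\max(1,\lambda^{-p^+})$ uniformly in $x$, the function $\varphi(\lambda):=\rho(u/\lambda)=\int_{\Omega}\lambda^{-p(x)}|u|^{p(x)}\,dx$ is finite for every $\lambda>0$, continuous by dominated convergence, and strictly decreasing, with $\varphi(\lambda)\to\infty$ as $\lambda\to 0^{+}$ and $\varphi(\lambda)\to 0$ as $\lambda\to\infty$; hence $\varphi$ maps $(0,\infty)$ bijectively onto $(0,\infty)$ and $|u|_{p(x)}$ is exactly the unique solution of $\varphi(\lambda)=1$ (the case $u=0$ being trivial).

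Granting this, parts (i)--(iii) are pure scaling. Writing $\mu=|u|_{p(x)}$ and $w=u/\mu$, so that $\rho(w)=1$ and $\rho(u)=\int_{\Omega}\mu^{p(x)}|w|^{p(x)}\,dx$, I would apply the elementary bounds $\mu^{p^-}\leq\mu^{p(x)}\leq\mu^{p^+}$ for $\mu\geq 1$ and $\mu^{p^+}\leq\mu^{p(x)}\leq\mu^{p^-}$ for $0<\mu\leq 1$; since $|w|^{p(x)}\,dx$ has total mass $\rho(w)=1$, this yields $\mu^{p^-}\leq\rho(u)\leq\mu^{p^+}$ when $\mu>1$, $\mu^{p^+}\leq\rho(u)\leq\mu^{p^-}$ when $\mu<1$, and $\rho(u)=1$ when $\mu=1$, and the trichotomy in (i) then follows from (ii), (iii) and $\rho(0)=0$.

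For (iv) I would chain several implications. The equivalence $|u_k-u|_{p(x)}\to 0\iff\rho(u_k-u)\to 0$ is immediate from (i) and (iii): if the norm tends to $0$ it is eventually $<1$, whence $\rho(u_k-u)\leq|u_k-u|_{p(x)}^{p^-}\to 0$; conversely if the modular tends to $0$ it is eventually $<1$, so $|u_k-u|_{p(x)}<1$ and then $|u_k-u|_{p(x)}\leq\rho(u_k-u)^{1/p^+}\to 0$. Next, $\rho(u_k-u)\to 0$ implies $u_k\to u$ in measure via the Chebyshev-type estimate $\rho(u_k-u)\geq\min(\varepsilon^{p^-},\varepsilon^{p^+})\,\bigl|\{x\in\Omega:|u_k(x)-u(x)|>\varepsilon\}\bigr|$. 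The hard part will be the two remaining implications, namely that $\rho(u_k-u)\to 0$ forces $\rho(u_k)\to\rho(u)$, and that $u_k\to u$ in measure together with $\rho(u_k)\to\rho(u)$ forces $\rho(u_k-u)\to 0$: here only convergence in measure is available, not a.e.\ convergence, so I would argue by contradiction along a subsequence, pass to a further subsequence converging a.e., and invoke the generalized dominated convergence theorem (Pratt's lemma) with dominants supplied by the convexity inequality $(a+b)^{p(x)}\leq 2^{p^+-1}(a^{p(x)}+b^{p(x)})$ for $a,b\geq 0$; the integrals of these dominants converge to the integral of their a.e.\ limit precisely because the relevant modulars are assumed to converge, so Pratt's lemma gives the desired modular convergence along that subsequence, contradicting its choice. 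Since every subsequence thus admits a further subsequence along which the claim holds, the full sequence obeys it, which closes the chain and completes the proof.
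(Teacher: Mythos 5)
Your proposal is correct. Note that the paper does not prove Lemma \ref{lem2.2} at all: it is quoted from the references \cite{FZ, FX3} as a standard property of the Luxemburg norm and its modular, so there is no in-text argument to compare against; your write-up is essentially the classical proof found in those sources. The key points all check out: with $p^+<\infty$ the map $\lambda\mapsto\rho(u/\lambda)$ is finite, continuous and strictly decreasing for $u\neq 0$, so the Luxemburg infimum is attained at the unique root of $\rho(u/\lambda)=1$ (the unit-ball property), and (i)--(iii) then follow from the pointwise bounds on $\mu^{p(x)}$ integrated against the unit-mass density $|u/\mu|^{p(x)}$. In (iv), the norm--modular equivalence via (i) and (iii), the Chebyshev-type estimate for convergence in measure, and the subsequence--of--subsequence argument combined with Pratt's generalized dominated convergence theorem (with dominants from $(a+b)^{p(x)}\leq 2^{p^+-1}(a^{p(x)}+b^{p(x)})$, whose integrals converge exactly because the modulars do) correctly handle both remaining implications, including the passage from convergence in measure to a.e.\ convergence along subsequences. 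The only hypotheses you use, $1<p^-\leq p^+<\infty$ and finiteness of the modular on $L^{p(x)}(\Omega)$, are exactly those available in the paper, so the proof is complete as written.
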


Similar to Lemma \ref{lem2.2}, we have

\begin{Lem} \label{lem2.3}
Set $$L(u)=\int_{\Omega}|\nabla u|^{p(x)}\,dx,\ \ \mbox{for}\ u\in W^{1,p(x)}_0(\Omega).$$
If $u\in W^{1,p(x)}_0(\Omega)$ and $\{u_k\}_{k\in\mathbb{N}}\subset W^{1,p(x)}_0(\Omega)$, we have\\
(i) $\|u\|<1\ (=1;\ >1)\Leftrightarrow L(u)<1\ (=1;\ >1)$;\\
(ii) $\|u\|>1\Rightarrow \|u\|^{p^-}\leq L(u)\leq \|u\|^{p^+}$;\\
(iii) $\|u\|<1\Rightarrow \|u\|^{p^+}\leq L(u)\leq \|u\|^{p^-}$;\\
(iv)  $\|u_k\|\rightarrow 0\Leftrightarrow L(u_k)\rightarrow 0$; $\|u_k\|\rightarrow \infty\Leftrightarrow
L(u_k)\rightarrow \infty$.
\end{Lem}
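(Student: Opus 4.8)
The plan is to reduce everything to Lemma \ref{lem2.2} applied to the scalar function $v=|\nabla u|$. For $u\in W^{1,p(x)}_0(\Omega)$ we have $v=|\nabla u|\in L^{p(x)}(\Omega)$, and by the very definitions of the norm $\|\cdot\|$ and of the modular $\rho$ in Lemma \ref{lem2.2},
$$|v|_{p(x)}=|\nabla u|_{p(x)}=\|u\|,\qquad \rho(v)=\int_{\Omega}|v|^{p(x)}\,dx=\int_{\Omega}|\nabla u|^{p(x)}\,dx=L(u).$$
Consequently parts (i), (ii), (iii) of Lemma \ref{lem2.3} are nothing but parts (i), (ii), (iii) of Lemma \ref{lem2.2} rewritten for $v$, so they require no further argument.

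For part (iv) I would \emph{not} quote Lemma \ref{lem2.2}(iv) directly, since that statement is about convergence of a sequence to a limit function together with convergence of the modulars, whereas here I only need the two-sided equivalences for convergence to $0$ and divergence to $\infty$. Instead I would derive them from parts (i)--(iii) just established. If $\|u_k\|\to 0$, then $\|u_k\|<1$ for $k$ large, so (iii) gives $\|u_k\|^{p^+}\le L(u_k)\le\|u_k\|^{p^-}$, hence $L(u_k)\to 0$; conversely, if $L(u_k)\to 0$ then $L(u_k)<1$ for $k$ large, so $\|u_k\|<1$ by (i), and then $\|u_k\|^{p^+}\le L(u_k)$ yields $\|u_k\|\le L(u_k)^{1/p^+}\to 0$. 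The divergence case is symmetric, using (ii): if $\|u_k\|\to\infty$ then eventually $\|u_k\|>1$ and $L(u_k)\ge\|u_k\|^{p^-}\to\infty$; if $L(u_k)\to\infty$ then eventually $L(u_k)>1$, so $\|u_k\|>1$ by (i), and $L(u_k)\le\|u_k\|^{p^+}$ gives $\|u_k\|\ge L(u_k)^{1/p^+}\to\infty$.

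I do not expect any genuine obstacle in this lemma; it is a direct transcription of Lemma \ref{lem2.2}. The only point worth stating carefully is the observation above that part (iv) should be obtained from the elementary inequalities in (i)--(iii) rather than read off verbatim from Lemma \ref{lem2.2}(iv).
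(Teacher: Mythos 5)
Your proposal is correct and follows essentially the route the paper intends: Lemma \ref{lem2.3} is stated there without proof, with the remark ``Similar to Lemma \ref{lem2.2}'', i.e.\ exactly your reduction via $v=|\nabla u|$, $|v|_{p(x)}=\|u\|$, $\rho(v)=L(u)$. Your extra care in deriving part (iv) from (i)--(iii) rather than from Lemma \ref{lem2.2}(iv) is sound and in fact supplies the small detail the paper leaves implicit.
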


\begin{Def} \label{def2.4}
$u\in W^{1,p(x)}_0(\Omega)$ is called a weak solution of problem \eqref{eq1.2} if
$$\int_{\Omega}|\nabla u|^{p(x)-2}\nabla u\cdot\nabla \phi\,dx=\int_{\Omega}Q(x)| u|^{r(x)-2} u\phi\,dx$$
for all $\phi\in W^{1,p(x)}_0(\Omega)$.
\end{Def}

\section{Infinitely many small solutions}

\hspace*{\parindent} In this section, we use a truncation technique and the Clark's theorem to get a
sequence of solutions converging to zero. We first introduce a variant of the Clark's theorem.

\begin{Thm}  (\cite{LW}, Theorem 1.1)\label{thm3.1}
Let $X$ be a Banach space, $\Phi\in C^1(X,\R)$. Assume $\Phi$ satisfies the Palais-Smale condition
($(PS)$ condition for short), is even and bounded from below, and $\Phi(0)=0$. If for any $k\in \N$, there
exists a $k$-dimensional subspace $X^k$ of $X$ and $\rho_k>0$ such that $\sup_{X^k\cap S_{\rho_k}}\Phi<0$, where $S_{\rho}=\{u\in X\,|\,\|u\|=\rho\}$, then at least one of the following conclusions holds.\\
(i) There exists a sequence of critical points $\{u_k\}$ satisfying $\Phi(u_k)<0$ for all $k$ and
$\|u_k\|\rightarrow 0$ as $k\rightarrow \infty$.\\
(ii) There exists $R>0$ such that for any $0<b<R$ there exists a critical point $u$ such that $\|u\|=b$
and $\Phi(u)=0$.
\end{Thm}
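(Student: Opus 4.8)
The plan is to realize both conclusions through the classical Krasnoselskii genus minimax scheme, and to read the dichotomy off from how the minimax levels are attained. Write $\gamma(\cdot)$ for the genus and, for $k\in\N$, set
$$\Gamma_k=\bigl\{A\subset X\setminus\{0\}:A\ \text{closed and symmetric},\ \gamma(A)\ge k\bigr\},\qquad c_k=\inf_{A\in\Gamma_k}\ \sup_{u\in A}\Phi(u).$$
The first step is to collect the elementary properties of $(c_k)$. Since $\Phi$ is bounded below, $c_k>-\infty$; since $\Gamma_{k+1}\subset\Gamma_k$, the sequence $(c_k)$ is nondecreasing; and since $u\mapsto u/\rho_k$ is an odd homeomorphism of $X^k\cap S_{\rho_k}$ onto the unit sphere of the $k$-dimensional space $X^k$, that set has genus exactly $k$, so the hypothesis gives $c_k\le\sup_{X^k\cap S_{\rho_k}}\Phi<0$. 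Hence $-\infty<c_1\le c_2\le\cdots<0$ and $c:=\lim_{k}c_k=\sup_k c_k\le 0$.

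The second step is the standard critical-point output of genus theory. Because $\Phi$ is even and satisfies the $(PS)$ condition, the equivariant quantitative deformation lemma applies and yields that each $c_k$ is a critical value of $\Phi$ (and more precisely, if $c_k=c_{k+1}=\cdots=c_{k+p}=:\bar c$ then $\gamma(K_{\bar c})\ge p+1$, where $K_a=\{u\in X:\Phi(u)=a,\ \Phi'(u)=0\}$). I would then prove $c=0$. If not, $c<0$; by $(PS)$ the set $K_c$ is compact, and $0\notin K_c$ since $\Phi(0)=0\ne c$, so $n_0:=\gamma(K_c)$ is finite. Choosing a symmetric open neighbourhood $N\supset K_c$ with $\gamma(\overline N)=n_0$ and $\varepsilon\in(0,|c|/2)$, the deformation lemma supplies an odd homeomorphism $\eta$ with $\eta(\Phi^{c+\varepsilon}\setminus N)\subset\Phi^{c-\varepsilon}$, where $\Phi^a=\{\Phi\le a\}$. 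Taking $k$ so large that $c_k>c-\varepsilon$ and choosing $A\in\Gamma_{k+n_0}$ with $\sup_A\Phi<c_{k+n_0}+\varepsilon\le c+\varepsilon$, the set $\overline{A\setminus N}$ is closed, symmetric, avoids $0$, lies in $\Phi^{c+\varepsilon}\setminus N$, and has genus at least $(k+n_0)-n_0=k$; therefore $\eta(\overline{A\setminus N})\in\Gamma_k$ lies in $\Phi^{c-\varepsilon}$, forcing $c_k\le c-\varepsilon$, a contradiction. Hence $c_k\nearrow 0^-$, and in particular for every $k$ there is a critical point $v_k$ with $\Phi(v_k)=c_k<0$.

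It remains to split into the two alternatives. Put $\delta=\inf\{\|u\|:\Phi'(u)=0,\ \Phi(u)<0\}$, which is well defined because every $v_k$ belongs to the set over which the infimum is taken. If $\delta=0$, I can pick critical points $u_k$ with $\Phi(u_k)<0$ and $\|u_k\|\to0$; by continuity and $\Phi(0)=0$ this forces $\Phi(u_k)\to 0^-$ as well, which is conclusion (i). If instead $\delta>0$, then no critical point of negative level meets the open ball $B_\delta$, and I would show that conclusion (ii) holds with some $R\in(0,\delta]$, namely that every sphere $S_b$ with $0<b<R$ carries a critical point at level $0$. The idea for this case is a deformation/Ekeland argument localized in $\overline{B_b}$: since $\inf_{\overline{B_b}}\Phi$ cannot be realized at an interior critical point of negative level (that would violate $b<\delta$), if $S_b$ also contained no critical point at level $0$ then one could deform $\Phi$ so as to make it strictly negative throughout a neighbourhood of the origin, which is incompatible with $c_k\to 0^-$ and the definition of the $c_k$. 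I expect this last conversion — turning the failure of (i) into the full one‑parameter family of level‑zero critical points demanded by (ii) — to be the genuine difficulty of the proof; the remaining ingredients are the standard genus and deformation calculus together with the $(PS)$ condition and the evenness of $\Phi$.
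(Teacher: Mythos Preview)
The paper does not prove Theorem~3.1 at all; it is quoted verbatim as Theorem~1.1 of Liu--Wang \cite{LW} and then invoked as a black box in the proof of the paper's Theorem~\ref{thm1.1}. There is therefore no in-paper proof to compare your proposal against.

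On your proposal itself: the first two steps --- the genus minimax values $c_k$, the fact that each is a negative critical value, and the argument that $c_k\nearrow 0^-$ --- are the standard Clark machinery and are correct. The dichotomy based on $\delta=\inf\{\|u\|:\Phi'(u)=0,\ \Phi(u)<0\}$ is also the right organizing idea, and case $\delta=0$ immediately gives~(i). However, your treatment of the case $\delta>0$ is, as you yourself concede, only a heuristic. The sentence ``if $S_b$ also contained no critical point at level $0$ then one could deform $\Phi$ so as to make it strictly negative throughout a neighbourhood of the origin, which is incompatible with $c_k\to 0^-$'' does not go through as written: the sets $X^k\cap S_{\rho_k}$ that witness $c_k<0$ are not tied to any fixed neighbourhood of the origin, and a local deformation near $0$ does not by itself force the infimum over genus-$k$ sets to drop. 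In the Liu--Wang paper the passage from $\delta>0$ to the full one-parameter family of level-zero critical points in~(ii) requires a genuinely careful argument combining a second deformation (relative to the critical set $K_0$ at level $0$) with the $(PS)$ condition, and it is precisely this step that distinguishes their result from the classical Clark theorem. So your outline has the right skeleton, but the part you flag as ``the genuine difficulty'' is indeed a gap, not merely a routine completion.
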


Recall that there is no restriction on $r^+$ Theorem \ref{thm1.1}.  In order to obtain infinitely many small
solutions, we need to have a proper truncation of the nonlinear terms. Let $\phi\in C(\R, \R)$ be an even
function satisfying $0\leq \phi(t)\leq 1$, $\phi(t)=1$ for $|t|\leq \frac{1}{2}$ and $\phi(t)=0$ for $|t|\geq 1$.
Define $g: \overline{\Omega}\times\R\to\R$ by $g(x,t):= Q(x)\phi(t)|t|^{r(x)-2}t$ and consider the auxiliary
problem
\begin{equation}\label{eq3.1}
\left\{\begin{array}{ll}
-\Delta_{p(x)} u=g(x,u), &\mbox{in}\ \Omega,\\
u=0, &\mbox{on}\ \partial \Omega.
\end{array}\right.
\end{equation}
The energy functional $J : W^{1,p(x)}_0(\Omega)\to\R$ associated with \eqref{eq3.1} is defined by
$$J(u)=\di_{\Omega}\df{|\nabla u|^{p(x)}}{p(x)}\,dx-\di_{\Omega}G(x,u)\,dx,$$
where $G(x,t)=\int_0^tg(x,s)\,ds$. We will show that $J$ satisfies the conditions of Theorem 3.1 and obtain
infinitely many solutions $\{u_k\}$ of \eqref{eq3.1} such that $\|u_k\|_{L^\infty(\Omega)}\leq \frac12$ for large
$k$. Then, for large $k$, there holds $g(x,u_k)= Q(x)|u_k|^{r(x)-2}u_k$, and so $u_k$ becomes a solution of \eqref{eq1.2}.

\begin{proof}[\bf Proof of Theorem \ref{thm1.1}.]
From the properties of $\eta$, we see that there exists a constant $M>0$ such that $|g(x,t)|\leq M$ and
$|G(x,t)|\leq M$ for all $(x, t)\in \overline{\Omega}\times \R$. Set $X:=W^{1,p(x)}_0(\Omega)$. Then it is
easy to see that $J(0)=0$, $J\in C^1(X,\R)$ is even and bounded from below, and satisfies the $(PS)$
condition.

Since $r(x_1)<p^-$ and $Q(x_1)>0$, we see from the continuity of $Q$ and $r$ that there exist
$\delta_1>0$, $Q_1>0$ and $r_1<p^-$ such that
\begin{equation}\label{eq3.2}
r^-\leq r(x)< r_1 \ \ \text{and} \ \ Q(x)>Q_1,\ \
\text{for}\ x\in\Omega_1 \triangleq B(x_1,\delta_1)\cap \Omega.
\end{equation}
By the definition of $g$ and \eqref{eq3.2}, we have
\begin{equation}\label{eq3.3}
G(x,u)=\df{Q(x)}{r(x)} |u|^{r(x)}\geq \df{Q_1}{r_1}|u|^{r_1},\ \
\text{for}\ x\in \Omega_1\ \text{and}\ |u|\leq \frac12.
\end{equation}
For $k\in\N$, choose $\{\varphi_j\}_{j=1}^k\subset C_0^\infty (\Omega)$ such that
$$\varphi_j\neq 0,\ \mbox{supp}\,\varphi_j\subset\Omega_1,\ \mbox{supp}\,\varphi_i\cap\mbox{supp}\,\varphi_j=\emptyset\ \mbox{for}\ i\neq j.$$
Let $X^k:=\text{span}\{\varphi_1, \varphi_2, \cdots, \varphi_k\}$. Then $X^k$ is a $k$-dimensional subspace
of $X$. Since any norms in a finite dimensional space are equivalent, there exist $a_k, b_k>0$ such that
\begin{equation}\label{eq3.4}
\|u\|_{r_1}\geq a_k\|u\|,\ \ \|u\|\geq b_k\|u\|_{L^\infty(\Omega)},\ \ \text{for any}\ u\in X^k.
\end{equation}
Set
$$\rho_k=\min\left\{\frac12, \frac{b_k}{2},\left(\frac{p^{-}Q_1a_k^{r_1}}{2r_1}\right)^{\frac{1}{p^{-}-r_1}}\right\}.$$
It follows from \eqref{eq3.3}, \eqref{eq3.4} and Lemma \ref{lem2.3} that, for any $u\in X^k\cap S_{\rho_k}$,
$$J(u)\leq\di_{\Omega}\df{|\nabla u|^{p(x)}}{p(x)}\,dx-\df{Q_1}{r_1}\di_{\Omega_1}|u|^{r_1}dx
\leq\df{1}{p^-}\|u\|^{p^-}-\df{Q_1a_k^{r_1}}{r_1}\|u\|^{r_1}<0.
$$
According to Theorem 3.1, $J$ has a sequence of nontrivial critical points $\{u_k\}$ satisfying $J(u_k)\leq 0$
for all $k$ and $\|u_k\|\rightarrow 0$ as $k\rightarrow \infty$. Since $g(x,t)$ is bounded in
$\overline{\Omega}\times \R$, the weak solutions $\{u_k\}$ belong to $C^{1,\mu}(\overline{\Omega})$ for some
$\mu \in (0,1)$ and they are bounded in this space (see \cite{FX2}). Here $\mu$ is independent of $k$ and $C^{1,\mu}(\overline{\Omega})$ denotes the set of all $C^1(\overline{\Omega})$ functions whose derivatives
are H\"older continuous with exponent $\mu$. Since $C^{1,\mu}(\overline{\Omega})$ is compactly embedded
in $C^1(\overline{\Omega})$, there is a subsequence of $\{u_k\}$, still denoted by itself, such that
$u_k\to u_{\infty}$ in $C^1(\overline{\Omega})$. Since $u_k\rightarrow 0$  in $X$, $u_{\infty}$ must be zero.
By the uniqueness of the limit $u_{\infty}$, we can show that $\{u_k\}$ itself (without extracting a subsequence) converges to zero in $C^1(\overline{\Omega})$. Then $\|u_k\|_{L^\infty(\Omega)}\leq \frac12$ for large $k$ and
so $u_k$ is a solution of \eqref{eq1.2}. The proof is complete.
\end{proof}

\section{Infinitely many large solutions}

\hspace*{\parindent} In this section, we will apply the symmetric mountain pass lemma (see \cite[Theorem 9.12]{PHR}) to get a sequence of large solutions. As in Section 3, we denote $X=W^{1,p(x)}_0(\Omega)$. The energy functional $I : X\to\R$ associated with \eqref{eq1.2} is defined by
$$I(u)=\di_{\Omega}\df{|\nabla u|^{p(x)}}{p(x)}\,dx-\di_{\Omega}\frac{Q(x)}{r(x)}|u|^{r(x)}\,dx.$$
First of all, we prove that the functional $I$ satisfies $(PS)$ condition.

\begin{Lem} \label{lem4.1}
Under the assumption of theorem 1.2, the functional $I$ satisfies $(PS)$ condition.
\end{Lem}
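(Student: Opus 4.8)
The plan is to verify the Palais–Smale condition for $I$ by the standard two-step argument: first show any $(PS)_c$ sequence $\{u_n\}\subset X$ is bounded, then upgrade weak convergence to strong convergence using the $(S_+)$ property of the $p(x)$-Laplacian operator together with the compact embedding of Lemma \ref{lem2.1}. The boundedness step is where the assumptions of Theorem \ref{thm1.2} really enter, so I expect that to be the main obstacle. Suppose, for contradiction, that $\|u_n\|\to\infty$ along a $(PS)_c$ sequence, so $I(u_n)\to c$ and $\|I'(u_n)\|_{X^*}\to 0$. The difficulty is that $I$ does \emph{not} satisfy the global $(AR)$-condition because $Q(x)|t|^{r(x)}/r(x)$ is only superlinear on $\Omega_+$ and is sublinear on $\Omega_-$; so I cannot simply estimate $I(u_n)-\frac{1}{\theta}\langle I'(u_n),u_n\rangle$ globally. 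The remedy is to split $\Omega$ according to the sign of $r(x)-p^+$ (or $r(x)-p^-$) and treat the pieces separately.

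Concretely, I would write, for a parameter $\theta$ with $p^+<\theta<r^+$ (possible since $r^+>p^+$),
\begin{equation}\label{eq4.1}
c+o(1)+o(1)\|u_n\| \geq I(u_n)-\frac{1}{\theta}\langle I'(u_n),u_n\rangle
= \int_\Omega\Big(\frac{1}{p(x)}-\frac{1}{\theta}\Big)|\nabla u_n|^{p(x)}\,dx
+\int_\Omega Q(x)\Big(\frac{1}{\theta}-\frac{1}{r(x)}\Big)|u_n|^{r(x)}\,dx.
\end{equation}
On the region where $r(x)\le\theta$ the second integrand is nonpositive, but under the hypothesis of Theorem \ref{thm1.2} that region is controlled: either $r^->p^+$, in which case one simply picks $\theta\in(p^+,r^-)$ and the second integral is nonnegative everywhere; or $1<r^-\le p^+$ together with $Q\equiv0$ on $\Omega_\varepsilon=\{x: p^--\varepsilon<r(x)<p^++\varepsilon\}$, in which case $Q(x)\ne0$ forces $r(x)\ge p^++\varepsilon$, so choosing $\theta\in(p^+,p^++\varepsilon)$ again makes the second integrand nonnegative on $\operatorname{supp}Q$ and the bad set carries no mass. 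Either way the right-hand side of \eqref{eq4.1} dominates $\big(\tfrac{1}{p^+}-\tfrac{1}{\theta}\big)\int_\Omega|\nabla u_n|^{p(x)}\,dx$, which by Lemma \ref{lem2.3}(ii) is $\ge\big(\tfrac{1}{p^+}-\tfrac{1}{\theta}\big)\|u_n\|^{p^-}$ once $\|u_n\|>1$. Hence $\|u_n\|^{p^-}\le C(1+\|u_n\|)$, and since $p^->1$ this forces $\{\|u_n\|\}$ to be bounded, a contradiction.

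Once boundedness is in hand, the strong convergence step is routine. By reflexivity pass to a subsequence with $u_n\rightharpoonup u$ in $X$; by Lemma \ref{lem2.1} the embedding $X\hookrightarrow L^{r(x)}(\Omega)$ is compact, so $u_n\to u$ in $L^{r(x)}(\Omega)$ and, up to a further subsequence, $u_n\to u$ a.e.\ with an $L^{r(x)}$ dominating function. Then $\int_\Omega Q(x)|u_n|^{r(x)-2}u_n(u_n-u)\,dx\to0$ by the generalized dominated convergence theorem (recall $r(x)<p^*(x)$ and $Q$ is bounded on $\overline\Omega$). Combining this with $\langle I'(u_n),u_n-u\rangle\to0$ gives
$$
\int_\Omega|\nabla u_n|^{p(x)-2}\nabla u_n\cdot\nabla(u_n-u)\,dx\to 0.
$$
Since the operator $u\mapsto-\Delta_{p(x)}u$ is of type $(S_+)$ on $W^{1,p(x)}_0(\Omega)$ (a standard fact; see \cite{FZ}), this together with $u_n\rightharpoonup u$ yields $u_n\to u$ strongly in $X$, establishing the $(PS)$ condition. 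The only genuinely delicate point is the case analysis in the boundedness step, which is exactly why the somewhat technical dichotomy ``$r^->p^+$, or $Q\equiv0$ on $\Omega_\varepsilon$'' was built into the statement of Theorem \ref{thm1.2}; everything else is a matter of invoking Lemmas \ref{lem2.1}--\ref{lem2.3} and the $(S_+)$ property.
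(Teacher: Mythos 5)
Your overall structure (bounded $(PS)$ sequence via a multiplier of the form $I(u_n)-\tfrac1\theta\langle I'(u_n),u_n\rangle$, then compact embedding plus the $(S_+)$ property of $-\Delta_{p(x)}$ to upgrade to strong convergence) is the same as the paper's, and your first case ($r^->p^+$, take $\theta\in(p^+,r^-)$) and the convergence step are fine. But there is a genuine gap in the second case of your boundedness argument. You assert that when $1<r^-\le p^+$ and $Q\equiv 0$ on $\Omega_\varepsilon=\{p^--\varepsilon<r(x)<p^++\varepsilon\}$, ``$Q(x)\neq 0$ forces $r(x)\ge p^++\varepsilon$''. That is not what the hypothesis says: $Q$ is only assumed to vanish on the middle band, so $Q$ may well be positive on the sublinear region $\Omega_{\varepsilon^-}=\{x:\,r(x)\le p^--\varepsilon\}$ (indeed this is exactly the situation in Corollary \ref{cor1.3}, where $Q(x_1)>0$ with $r(x_1)<p^-$). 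Consequently, with $\theta\in(p^+,p^++\varepsilon)$ the term $\int_\Omega Q(x)\bigl(\tfrac1\theta-\tfrac1{r(x)}\bigr)|u_n|^{r(x)}\,dx$ is \emph{not} nonnegative: on $\Omega_{\varepsilon^-}$ one has $r(x)<\theta$, so the integrand is negative there, and your claim that ``the bad set carries no mass'' fails. As written, the inequality you use to conclude $\|u_n\|^{p^-}\le C(1+\|u_n\|)$ does not follow.

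The gap is repairable, and the repair is precisely the extra estimate in the paper's proof: on $\Omega_{\varepsilon^-}$ the growth is strictly sublinear, $r(x)\le p^--\varepsilon$, so by the embedding (Lemma \ref{lem2.1}) the offending term is controlled by
$$\int_{\Omega_{\varepsilon^-}}Q(x)\Bigl(\frac1{r(x)}-\frac1\theta\Bigr)|u_n|^{r(x)}\,dx
\;\le\; C_\varepsilon\|u_n\|^{p^--\varepsilon}+C_\varepsilon,$$
which is of lower order than the coercive term $\tfrac{\varepsilon}{p^+\,(p^++\varepsilon)}\min\{\|u_n\|^{p^-},\|u_n\|^{p^+}\}$ coming from the gradient part (the paper uses the multiplier $p^++\varepsilon$ and splits $\Omega$ into $\Omega_{\varepsilon^-}$, $\Omega_\varepsilon$, $\Omega_{\varepsilon^+}$). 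With this additional absorption step your contradiction argument goes through; without it, the second case is not proved. A minor further remark: in the convergence step your appeal to generalized dominated convergence should be replaced (or supplemented) by the cleaner observation that $u_n\to u$ in $L^{r(x)}(\Omega)$ and $|u_n|^{r(x)-1}$ is bounded in the conjugate space, which is in effect what the paper does before invoking the $(S_+)$ property from \cite{FZ}.
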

\begin{proof}
Let $\{u_n\}\subset X$  be a $(PS)$ sequence of the functional $I$. Then there exists a constant $C>0$ such
that
\begin{equation}\label{eq4.1}
I(u_n)\leq C,\ \ I'(u_n)\rightarrow 0\ \text{in}\ X^*,
\end{equation}
where $X^*$ denotes the dual space of $X$.

We first prove that $\{u_n\}$ is bounded in $X$. If $r^->p^+$, then it follows from $Q(x)\geq 0$ and Lemma \ref{lem2.3} that
\begin{align*}
r^-I(u_n)-\langle I'(u_n),u_n\rangle
&=\di_{\Omega}\df{r^--p(x)}{p(x)}|\nabla u|^{p(x)}\,dx+\di_{\Omega}\df{r(x)-r^-}{r(x)}Q(x)|u_n|^{r(x)}dx\\
&\geq\df{r^--p^+}{p^+}\min\{\|u_n\|^{p^-}, \|u_n\|^{p^+}\}.
\end{align*}
From \eqref{eq4.1} and $1<p^-\leq p^+$, we see that $\{u_n\}$ is bounded in $X$. If $1< r^-\leq p^+$, we
set $\Omega_{\varepsilon^-}:=\{x\in\Omega\,|\,r(x)\leq p^--\varepsilon\}$ and
$\Omega_{\varepsilon^+}:=\{x\in\Omega\,|\,r(x)\geq p^++\varepsilon\}$. From $Q(x)\geq 0$ and Lemma
\ref{lem2.1}, we have
\begin{align}\label{eq4.2}
\di_{\Omega_{\varepsilon^-}}\df{p^++\varepsilon-r(x)}{r(x)}Q(x)|u_n|^{r(x)}dx
&\leq \df{p^++\varepsilon}{r^-}\sup_{x\in {\Omega}}Q(x)\di_{\Omega}(|u_n|^{p^--\varepsilon}+1)dx\nonumber\\
&\leq C_{\varepsilon}\|u_n\|^{p^--\varepsilon}+C_{\varepsilon}
\end{align}
and
\begin{align}\label{eq4.3}
\di_{\Omega_{\varepsilon^+}}\df{p^++\varepsilon-r(x)}{r(x)}Q(x)|u_n|^{r(x)}dx\leq 0,
\end{align}
where $C_\va>0$. Recall that $Q(x)\equiv 0$ in $\Omega_{\varepsilon}$. By \eqref{eq4.2}, \eqref{eq4.3}
and Lemma \ref{lem2.3}, we have
\begin{align*}
&(p^++\varepsilon)I(u_n)-\langle I'(u_n),u_n\rangle\\
=&\di_{\Omega}\df{p^++\varepsilon-p(x)}{p(x)}|\nabla u|^{p(x)}\,dx
-\di_{\Omega_{\varepsilon^-}}\df{p^++\varepsilon-r(x)}{r(x)}Q(x)|u_n|^{r(x)}dx\\
&-\di_{\Omega_{\varepsilon}}\df{p^++\varepsilon-r(x)}{r(x)}Q(x)|u_n|^{r(x)}dx
-\di_{\Omega_{\varepsilon^+}}\df{p^++\varepsilon-r(x)}{r(x)}Q(x)|u_n|^{r(x)}dx\\\\
\geq&\df{\varepsilon}{p^+}\min\{\|u_n\|^{p^-},\|u_n\|^{p^+}\}
-C_{\varepsilon}\|u_n\|^{p^--\varepsilon}-C_{\varepsilon}.
\end{align*}
Then, by \eqref{eq4.1}, $\{u_n\}$ is bounded in $X$.

Up to a subsequence, we may assume that $u_n\rightharpoonup u$ and then
$\langle I'(u_n),u_n-u\rangle\rightarrow 0$ as $n\rightarrow\infty$. Since the imbedding from X to
$L^{r(x)}(\Omega)$ is compact, we obtain $u_n\rightarrow u$ in $L^{r(x)}(\Omega)$. Then
$$\left |\di_{\Omega}Q(x)|u_n|^{r(x)-2}u_n(u_n-u)dx\right |
\leq\sup\limits_{x\in \Omega}Q(x)\di_{\Omega}|u_n|^{r(x)-1}|u_n-u|dx\to 0.$$
Therefore, one has
$$\di_{\Omega}|\nabla u_n|^{p(x)-2}\nabla u_n\cdot \nabla(u_n-u)dx\to 0.$$
By \cite[Theorem 3.1]{FZ}, we have $u_n\rightarrow u$ in $X$. Therefore, $I$ satisfies $(PS)$ condition.
\end{proof}

Since $r(x_2)>p^+$ and $Q(x_2)>0$, we see from the continuity of $Q$ and $r$ that there exist
$\delta_2>0$, $Q_2>0$ and $r_2>p^+$ such that
\begin{equation}\label{eq4.4}
r_2<r(x)\leq r^+ \ \ \text{and}\ \ Q(x)\geq Q_2,\ \
\text{for all}\ x\in\Omega_2 \triangleq B(x_2,\delta_2)\cap\Omega.
\end{equation}
For $k\in\N$, choose $\{\psi_j\}_{j=1}^k\subset C_0^\infty (\Omega)$ such that
$$\psi_j\neq 0,\ \mbox{supp}\,\psi_j\subset\Omega_2,\
\mbox{supp}\,\psi_i\cap\mbox{supp}\,\psi_j=\emptyset\ \mbox{for}\ i\neq j.$$
Denote $Y^k:=\text{span}\{\psi_1, \psi_2, \cdots, \psi_k\}$.

\begin{Lem}\label{lem4.2}
Under the assumption of theorem 1.2, there exists $R_k>0$ such that
\begin{equation}\label{eq4.5}
I(u)<0,\ \ \text{for any}\ u\in Y^k \ \text{with}\ \|u\|\geq R_k.
\end{equation}
\end{Lem}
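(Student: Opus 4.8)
The plan is to show that on the finite-dimensional subspace $Y^k$, the energy functional $I$ is dominated by a negative superlinear term coming from the region $\Omega_2$ where $r(x)>p^+$ and $Q(x)\ge Q_2$. First I would use the equivalence of norms on the finite-dimensional space $Y^k$: there exists $c_k>0$ such that $\|u\|_{r_2}\ge c_k\|u\|$ for all $u\in Y^k$, where $r_2>p^+$ is the constant from \eqref{eq4.4}. Since $\mbox{supp}\,\psi_j\subset\Omega_2$ for each $j$ and the supports are disjoint, every $u\in Y^k$ is supported in $\Omega_2$, so the nonlinear term of $I(u)$ can be bounded below on $\Omega_2$ alone.

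Next I would estimate the two pieces of $I(u)$ separately. For the gradient term, Lemma \ref{lem2.3} gives $\di_{\Omega}\frac{|\nabla u|^{p(x)}}{p(x)}\,dx\le\frac{1}{p^-}\max\{\|u\|^{p^-},\|u\|^{p^+}\}$, which for $\|u\|\ge 1$ is $\le\frac{1}{p^-}\|u\|^{p^+}$. For the nonlinear term, on $\Omega_2$ we have $r(x)>r_2>p^+$ and $Q(x)\ge Q_2$, so
$$\di_{\Omega}\df{Q(x)}{r(x)}|u|^{r(x)}\,dx=\di_{\Omega_2}\df{Q(x)}{r(x)}|u|^{r(x)}\,dx\ge\df{Q_2}{r^+}\di_{\Omega_2}|u|^{r(x)}\,dx.$$
The remaining point is to pass from the $r(x)$-integral to a fixed power $\|u\|_{r_2}^{r_2}$: splitting $\Omega_2$ into $\{|u|\le 1\}$ and $\{|u|>1\}$, on the former $|u|^{r(x)}\ge 0$ and on the latter $|u|^{r(x)}\ge|u|^{r_2}$, so $\di_{\Omega_2}|u|^{r(x)}\,dx\ge\di_{\Omega_2\cap\{|u|>1\}}|u|^{r_2}\,dx\ge\di_{\Omega_2}|u|^{r_2}\,dx-|\Omega_2|=\|u\|_{r_2}^{r_2}-|\Omega_2|$. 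Combining everything, for $\|u\|\ge 1$,
$$I(u)\le\df{1}{p^-}\|u\|^{p^+}-\df{Q_2}{r^+}\big(c_k^{r_2}\|u\|^{r_2}-|\Omega_2|\big).$$

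Since $r_2>p^+$, the right-hand side tends to $-\infty$ as $\|u\|\to\infty$; hence there exists $R_k\ge 1$ such that $I(u)<0$ whenever $u\in Y^k$ and $\|u\|\ge R_k$, which is \eqref{eq4.5}. I expect the only mildly delicate point to be the bookkeeping in converting $\di_{\Omega_2}|u|^{r(x)}\,dx$ into a genuine power of a norm uniformly over $Y^k$ — handled by the $\{|u|\le 1\}$/$\{|u|>1\}$ decomposition above — together with remembering that $R_k$ must be chosen $\ge 1$ so the estimates from Lemma \ref{lem2.3} for $\|u\|>1$ apply; there is no serious obstacle beyond this.
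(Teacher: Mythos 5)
Your proof is correct and follows essentially the same route as the paper: equivalence of norms on the finite-dimensional space $Y^k$, the lower bound $\frac{Q(x)}{r(x)}|u|^{r(x)}\geq\frac{Q_2}{r^+}\left(|u|^{r_2}-1\right)$ on $\Omega_2$ (which the paper states pointwise, while you derive it via the $\{|u|\le 1\}$/$\{|u|>1\}$ split — the same bookkeeping), and the comparison of the exponents $r_2>p^+$ to conclude $I(u)\to-\infty$ on $Y^k$. The only cosmetic difference is that the paper keeps $\max\{\|u\|^{p^-},\|u\|^{p^+}\}$ instead of restricting to $\|u\|\ge 1$.
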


\begin{proof}[\bf Proof.]
By \eqref{eq4.4}, we have
$$\frac{Q(x)}{r(x)}|u|^{r(x)}\geq \frac{Q_2}{r^+} |u|^{r_2},\ \ \text{for}\ x\in\Omega_2\ \text{and}\ |u|>1,$$
which implies that
\begin{equation}\label{eq4.6}
\frac{Q(x)}{r(x)}|u|^{r(x)}\geq \frac{Q_2}{r^+}( |u|^{r_2}-1),\ \ \text{for}\ x\in\Omega_2\ \text{and}\  u\in \R.
\end{equation}
Since dim\,$Y^k<\infty$, there exists $\tilde a_k>0$ such that
\begin{equation}\label{eq4.7}
\|u\|_{r_2}\geq \tilde a_k\|u\|,\ \ \text{for any}\ u\in Y^k.
\end{equation}
Using \eqref{eq4.6} and \eqref{eq4.7} we have, for any $u\in Y^k$,
\begin{align*}
I(u)=&\di_{\Omega}\df{|\nabla u|^{p(x)}}{p(x)}\,dx-\di_{\Omega_2}\df{Q(x)}{r(x)}|u|^{r(x)}\,dx\\
\leq&\di_{\Omega}\df{|\nabla u|^{p(x)}}{p(x)}\,dx-\df{Q_2}{r^+}\di_{\Omega_2}(|u|^{r_2}-1)\,dx\\
\leq&\df{1}{p^-}\max\{\|u\|^{p^-},\|u\|^{p^+}\}
-\df{Q_2\tilde a_k^{r_2}}{r^+}\|u\|^{r_2}+\df{Q_2}{r^+}|\Omega_2|.
\end{align*}
Since $r_2>p^+$, \eqref{eq4.5} holds for large $R_k$.
\end{proof}

Define the minimax value
$$c_k=\inf_{h\in G_k}\max_{u\in D_k}I(h(u)),$$
where $D_k=\overline{B}_{R_k}\cap Y^k$ and
$G_k=\{h\in C(D_k, X)\,|\,h \ \text{is odd and}\ h=id \ \text{on}\ \partial B_{R_k}\cap Y^k\}$.

\begin{Rem} \label{rem4.3}
{\rm Using the arguments in the proof of \cite[Lemma 4.9]{KR}, we see that the minimax value $c_k$
is independent of the choice of $R_k$ satisfying \eqref{eq4.5}. Therefore, we can replace $R_k$ by a
larger number such that $\{R_k\}$ is strictly increasing and $\lim_{k\to\infty}R_k=+\infty$.}
\end{Rem}

\begin{Lem} \label{lem4.4}
$c_k\to+\infty$ as $k\to+\infty$.
\end{Lem}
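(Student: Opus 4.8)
The strategy is the one underlying the symmetric mountain pass theorem: I will bound $c_k$ from below by $b_k:=\inf\{I(u):u\in Z_k,\ \|u\|=\rho_k\}$, where $\{Z_k\}$ is a decreasing sequence of closed subspaces of $X$ with $\operatorname{codim}Z_k\le k-1$ and $\{\rho_k\}$ is a suitable sequence of radii, and then show that $b_k\to+\infty$. Two things have to be arranged: the subspaces $Z_k$ together with the fact that $I$ is large on spheres of large radius inside them, and the topological linking that yields $c_k\ge b_k$.

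For the subspaces, since $X=W^{1,p(x)}_0(\Omega)$ is separable, $X^\ast$ contains a countable total family $\{e_i^\ast\}_{i\ge1}$ (i.e. $e_i^\ast(u)=0$ for all $i$ forces $u=0$); set $Z_k:=\{u\in X:e_1^\ast(u)=\cdots=e_{k-1}^\ast(u)=0\}$, so that $X=Z_1\supseteq Z_2\supseteq\cdots$, $\operatorname{codim}Z_k\le k-1$ and $\bigcap_kZ_k=\{0\}$. Put $\beta_k:=\sup\{|u|_{r(x)}:u\in Z_k,\ \|u\|=1\}$. The first point is that $\beta_k\to0$: the sequence is nonincreasing, and if $\beta_k\ge\delta>0$ for all $k$ I would pick $u_k\in Z_k$ with $\|u_k\|=1$ and $|u_k|_{r(x)}>\delta/2$; by reflexivity $u_k\rightharpoonup u$ along a subsequence, each $e_i^\ast$ annihilates $u_k$ for $k>i$, so $e_i^\ast(u)=0$ for all $i$, hence $u=0$; but the embedding $X\hookrightarrow L^{r(x)}(\Omega)$ is compact (Lemma~\ref{lem2.1}), so $|u_k|_{r(x)}\to|u|_{r(x)}=0$, a contradiction.

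Next, for $u\in Z_k$ with $\|u\|\ge1$, Lemmas~\ref{lem2.2}--\ref{lem2.3}, $Q\ge0$, $r(x)\ge r^-$ and $|u|_{r(x)}\le\beta_k\|u\|$ give
$$I(u)\ \ge\ \frac{1}{p^+}\|u\|^{p^-}-\frac{\sup_\Omega Q}{r^-}\Big(\beta_k^{r^-}\|u\|^{r^-}+\beta_k^{r^+}\|u\|^{r^+}\Big).$$
Since $r^+>p^+\ge p^-$, I would fix $\rho_k$ by requiring $\dfrac{\sup_\Omega Q}{r^-}\beta_k^{r^+}\rho_k^{r^+}=\dfrac{1}{4p^+}\rho_k^{p^-}$; then $\rho_k\to\infty$ and $\beta_k\rho_k\to\infty$ (because $\beta_k\to0$), and a short computation shows $\beta_k^{r^-}\rho_k^{r^-}=o(\rho_k^{p^-})$. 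Hence, for $k$ large,
$$b_k:=\inf\{I(u):u\in Z_k,\ \|u\|=\rho_k\}\ \ge\ \frac{1}{2p^+}\rho_k^{p^-}\ \longrightarrow\ +\infty.$$
By Remark~\ref{rem4.3} the value $c_k$ does not depend on the choice of $R_k$ satisfying \eqref{eq4.5}, so I may also assume $R_k>\rho_k$.

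The remaining, and in my view the crucial, step is the linking inequality $c_k\ge b_k$, equivalently: every $h\in G_k$ satisfies $h(D_k)\cap\{u\in Z_k:\|u\|=\rho_k\}\ne\emptyset$. I would prove this by a Borsuk--Ulam/genus argument. Given $h\in G_k$, the set $U:=\{u\in Y^k:\|u\|<R_k,\ \|h(u)\|<\rho_k\}$ is a bounded, symmetric, open neighbourhood of $0$ in the $k$-dimensional space $Y^k$; since $h=\mathrm{id}$ on $\partial B_{R_k}\cap Y^k$ and $R_k>\rho_k$, no point of $\partial U$ has norm $R_k$, and consequently $\|h(u)\|=\rho_k$ for every $u\in\partial U$. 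Now $\partial U$, being the boundary of a bounded symmetric neighbourhood of the origin in $\mathbb R^k$, has Krasnosel'skii genus $\gamma(\partial U)=k$; as $h|_{\partial U}$ is odd and continuous, $\gamma\big(h(\partial U)\big)\ge k$. If $h(\partial U)\cap Z_k=\emptyset$, then $u\mapsto(e_1^\ast(u),\ldots,e_{k-1}^\ast(u))$ is an odd continuous map of $h(\partial U)$ into $\mathbb R^{k-1}\setminus\{0\}$, forcing $\gamma\big(h(\partial U)\big)\le\operatorname{codim}Z_k\le k-1$, a contradiction. Hence there is $u_0\in\partial U$ with $h(u_0)\in Z_k$ and $\|h(u_0)\|=\rho_k$, so $\max_{u\in D_k}I(h(u))\ge I(h(u_0))\ge b_k$; taking the infimum over $h\in G_k$ gives $c_k\ge b_k$, and together with the previous step $c_k\to+\infty$. (The purely topological inputs---$\gamma(\partial U)=k$ and the monotonicity of $\gamma$ under odd maps---are standard; alternatively the whole intersection statement may be quoted from \cite{PHR} and the references in Remark~\ref{rem4.3}.)
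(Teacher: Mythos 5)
Your proposal is correct, and its skeleton is the same as the paper's: the paper proves Lemma \ref{lem4.4} by combining a genus-based intersection lemma (its Lemma \ref{lem4.7}, proved exactly as your Borsuk--Ulam/genus step with the set $V=\{u\in Y^k:\|u\|<R_k,\ h(u)\in B_R\}$), a sequence of subspaces $Y_k$ of codimension $k-1$ on which the relevant Lebesgue norm is controlled by $\delta_k\|u\|$ with $\delta_k\to0$ (its Lemma \ref{lem4.8}, quoted from \cite{SM}, which you re-prove via weak convergence plus the compact embedding), and then the same lower bound on spheres of an optimized radius, enlarged below $R_k$ via Remark \ref{rem4.3}. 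So you have essentially reconstructed Lemmas \ref{lem4.7} and \ref{lem4.8} inline rather than invoking them. The one substantive difference is technical but worth noting: you estimate the nonlinear term through the Luxemburg norm $|u|_{r(x)}$ and the modular (Lemma \ref{lem2.2}), whereas the paper bounds $\int_\Omega|u|^{r(x)}dx\le\int_\Omega(|u|^{r^+}+1)dx$ and uses $\|u\|_{r^+}\le\delta_k\|u\|$, which tacitly requires the embedding $X\hookrightarrow L^{r^+}(\Omega)$, i.e.\ $r^+<p^*(x)$ on all of $\overline\Omega$ --- something the hypothesis $r(x)<p^*(x)$ does not by itself guarantee; your variable-exponent version sidesteps this and is, if anything, the safer route. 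One small imprecision in your write-up: the claim $\beta_k^{r^-}\rho_k^{r^-}=o(\rho_k^{p^-})$ fails in the borderline case $r^-=r^+$ (constant $r>p^+$, which Theorem \ref{thm1.2} allows), where that term equals exactly $\frac{1}{4p^+}\rho_k^{p^-}$ by your choice of $\rho_k$; the final bound $b_k\ge\frac{1}{2p^+}\rho_k^{p^-}$ survives unchanged, so this is cosmetic, but you should state it as ``$\le\frac{1}{4p^+}\rho_k^{p^-}$ for large $k$'' rather than as a little-$o$.
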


We postpone the proof of Lemma \ref{lem4.4} for a moment and prove Theorem \ref{thm1.2} in the
following.

\begin{proof}[\bf Proof of Theorem \ref{thm1.2}.]
It follows from Lemma \ref{lem4.4} that there exists $k_0\in\N$ and $\alpha>0$ such that
$$c_k\geq \alpha>0,\ \ \mbox{for any}\ k\geq k_0.$$
We claim that, for $k\geq k_0$, the minimax value $c_k$ is a critical value of $I$. If this were false, then,
by Lemma \ref{lem4.1}, there would exist $\va\in(0,\alpha)$ and $\eta\in C([0,1]\times X, X)$ such that
\begin{itemize}
\item $\eta(0, u)=u$ for all $u\in X$;

\item $\eta(1, I^{c_k+\va})\subset I^{c_k-\va}$, where $I^d=\{u\in X\,|\, I(u)\leq d\}$;

\item If $I(u)\not\in[c_k-\va, c_k+\va]$, then $\eta(t,u)=u$ for all $t\in[0,1]$;

\item $\eta(t,u)$ is odd in $u$.
\end{itemize}
Choose $h\in G_k$ such that $\max_{u\in D_k}I(h(u))<c_k+\va$. Then $\eta(1, h(\cdot))\in G_k$ and
$$I(\eta(1, h(u)))\leq c_k-\va,\ \ \mbox{for all}\ u\in D_k.$$
This contradicts the definition of $c_k$.

For $k\geq k_0$, let $v_k$ be a critical point corresponding to $c_k$. Then we have
$$\di_{\Omega}|\nabla v_k|^{p(x)}\,dx=\di_{\Omega}Q(x)|v_k|^{r(x)}dx,$$
which combined with $I(v_k)=c_k$ leads to
\begin{align*}
c_k&=\di_{\Omega}\df{|\nabla v_k|^{p(x)}}{p(x)}\,dx-\di_{\Omega}\df{Q(x)}{r(x)}|v_k|^{r(x)}dx\\
&\leq\df{1}{p^-} \di_{\Omega}|\nabla v_k|^{p(x)}\,dx-\df{1}{r^+}\di_{\Omega}Q(x)|v_k|^{r(x)}dx\\
&=\left(\df{1}{p^-}-\df{1}{r^+}\right)\di_{\Omega}|\nabla v_k|^{p(x)}\,dx\\
&\leq\left(\df{1}{p^-}-\df{1}{r^+}\right)\max\{\|v_k\|^{p^-},\|v_k\|^{p^+}\}.
\end{align*}
By Lemma \ref{lem4.4} and $r^+>p^-$, we have $\|v_k\|\to \infty$ as $k\rightarrow \infty$. The proof
is complete.
\end{proof}

To reach the conclusion, it only remains to prove Lemma \ref{lem4.4}. For this purpose, we recall the
definition and properties of genus which is due to Krasnoselski.

\begin{Def}
Let $E$ be a Banach space. A subset $A$ of $E$ is said to be symmetric if $u\in A$ implies $-u\in A$.
Let $\mathcal{A}$ denote the family of closed symmetric subsets $A$ of $E\setminus\{0\}$. For
$A\in \mathcal{A}$, we define the genus $\gamma(A)$ of $A$ by the smallest integer $m$ such that
there exists an odd continuous map from $A$ to $\R^m\setminus \{0\}$. If there does not exist a finite
such $m$, we define $\gamma(A)=\infty$. Moreover, we set $\gamma(\emptyset)=0$.
\end{Def}

\begin{Lem} \label{lem4.6} Let $A, B\in \mathcal{A}$. Then we have\\
(i) If $A\subset B$, then $\gamma(A)\leq \gamma(B)$.\\
(ii) If there exists an odd continuous map $f\in C(A,B)$, then $\gamma(A)\leq \gamma(B)$.\\
(iii) If $V$ is a bounded symmetric neighborhood of 0 in $\R^N$, then $\gamma(\partial V)=N$. \\
(iv) If $Y$ is a subspace of $E$ such that codim\,$Y=m$ and $\gamma(A)>m$, then $A\cap Y\neq \emptyset$.
\end{Lem}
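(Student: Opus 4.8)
The plan is to derive all four assertions straight from the definition of the Krasnoselski genus, invoking only two classical facts that lie outside the paper: the Tietze extension theorem and Borsuk's theorem on the degree of odd maps. Parts (i), (ii) and (iv) should each come out in a line or two; the entire content sits in (iii). For (ii), which contains (i) as the special case where $f$ is the inclusion $A\hookrightarrow B$, I would argue by composition: if $\gamma(B)=\infty$ there is nothing to show, and otherwise, writing $m=\gamma(B)$ and fixing an odd continuous $g\colon B\to\R^m\setminus\{0\}$, the map $g\circ f\colon A\to\R^m\setminus\{0\}$ is odd and continuous, so $\gamma(A)\le m=\gamma(B)$ by definition.

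\emph{Part (iii).} The upper bound is immediate: since $0$ is interior to $V$ we have $0\notin\partial V$, and boundedness plus symmetry of $V$ make $\partial V$ a nonempty closed symmetric subset of $\R^N\setminus\{0\}$, so the inclusion $\partial V\hookrightarrow\R^N\setminus\{0\}$ is odd and continuous and $\gamma(\partial V)\le N$. For the reverse inequality I would argue by contradiction. Suppose $m:=\gamma(\partial V)\le N-1$ and let $h\colon\partial V\to\R^m\setminus\{0\}$ be odd and continuous. By the Tietze extension theorem $h$ extends to a continuous $H\colon\overline V\to\R^m$; replacing $H(x)$ by $\tfrac12\bigl(H(x)-H(-x)\bigr)$ we may take $H$ odd, still equal to $h$ on $\partial V$. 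Regard $H$ as a map $g\colon\overline V\to\R^N$ by appending $N-m\ge 1$ zero coordinates: then $g$ is continuous, odd on $\overline V$, and nonzero on $\partial V$ because $\mathrm{dist}(0,h(\partial V))>0$, so Borsuk's theorem gives that $\deg(g,V,0)$ is an odd integer, in particular nonzero. But $g(\overline V)$ lies in the proper subspace $\R^m\times\{0\}^{N-m}$, so the last standard basis vector $e_N$ is not in $g(\overline V)$, and the homotopy $(t,x)\mapsto g(x)-te_N$ avoids $0$ on $\partial V$ (a vector $te_N$, whose first $m$ coordinates vanish, can equal $g(x)$ only for $t=0$, which would force $g(x)=0$). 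Hence $\deg(g,V,0)=\deg(g,V,e_N)=0$, a contradiction, and therefore $\gamma(\partial V)=N$.

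\emph{Part (iv).} Here I would suppose $A\cap Y=\emptyset$ and produce a contradiction with $\gamma(A)>m$. Since $Y$ is a closed subspace of codimension $m$, there is a bounded linear surjection $P\colon E\to\R^m$ with $\ker P=Y$ (for instance, $\R^m$-coordinates of the quotient map $E\to E/Y$). Being linear, $P$ is odd; it is continuous; and $A\cap Y=\emptyset$ means $P$ does not vanish on $A$. Thus $P|_A\colon A\to\R^m\setminus\{0\}$ is odd and continuous, giving $\gamma(A)\le m$ — contradiction. Hence $A\cap Y\neq\emptyset$. The only step with genuine content is the lower bound in (iii): this is precisely where the Borsuk--Ulam phenomenon enters, and it is also the one place requiring care, since $V$ is an arbitrary symmetric neighborhood rather than a ball, so one must pass through the Tietze extension and check that the degree-theoretic homotopy is admissible. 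Everything else reduces, via a restriction, a composition, or a bounded linear projection, to the defining property of the genus.
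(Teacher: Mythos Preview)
Your argument is correct. In the paper Lemma~4.6 is stated as a list of standard properties of the Krasnoselski genus and is given no proof at all; it is invoked as background (the classical reference being Rabinowitz's CBMS monograph, cited as \cite{PHR} in the paper). So there is no ``paper's proof'' to compare with: you have supplied what the authors merely quote.

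Your treatment is the standard one. Parts (i), (ii), (iv) are exactly the one-line composition/projection arguments one finds in the literature, and your proof of (iii) via Tietze extension, oddification, and Borsuk's degree theorem is the canonical route. One small remark on (iv): the statement in the paper does not say $Y$ is closed, while your construction of the continuous projection $P$ uses this. In the paper's only application (Lemma~4.7) $Y$ is explicitly closed, and in any case a finite-codimensional subspace that is the kernel of finitely many continuous functionals is automatically closed; but strictly speaking you are proving a slightly narrower statement than the one written, so if you want to be meticulous you might add a word on why closedness may be assumed (or simply note that the standard formulation of this property requires it).
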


Similar to \cite[Proposition 9.23]{PHR}, we have

\begin{Lem} \label{lem4.7}
If $Y$ is a closed subspace of $X$ with codim\,$Y<k$, then
$$h(D_k)\cap \partial B_R \cap Y \neq \emptyset,\ \ \text{for all}\ h\in G_k \ \text{and}\ 0<R<R_k,$$
where $G_k=\{h\in C(D_k, X)\,|\,h \ \text{is odd and}\ h=id \ \text{on}\ \partial B_{R_k}\cap Y^k\}$ and
$R_k$ is from Lemma \ref{lem4.2}.
\end{Lem}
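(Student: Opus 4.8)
The plan is to mimic the classical intersection argument (Proposition 9.23 in \cite{PHR}) via a degree-theoretic / genus-theoretic obstruction, adapted to the odd continuous maps $h\in G_k$ and the finite-codimensional subspace $Y$. First I would fix $h\in G_k$ and $0<R<R_k$ and argue by contradiction, assuming $h(D_k)\cap\partial B_R\cap Y=\emptyset$. The key object to introduce is the set
$$ A:=\{u\in D_k\,|\,\|Ph(u)\|_{Y}\leq R\quad\text{after a suitable projection/retraction onto }Y\}, $$
or more precisely, following \cite{PHR}, one considers $O:=\{u\in D_k\,|\,h(u)\in B_R\}$, an open bounded symmetric neighborhood of $0$ in $Y^k$, whose boundary $\partial O$ is mapped by $h$ into $\partial B_R$. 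Since by Lemma \ref{lem4.2} we have $R<R_k$ and $h=\mathrm{id}$ on $\partial B_{R_k}\cap Y^k$, the set $O$ is a proper symmetric neighborhood of $0$, so by part (iii) of Lemma \ref{lem4.6}, $\gamma(\partial O)=k$.

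Next I would transport this genus through $h$. Because $h$ is odd and continuous and $h(\partial O)\subset\partial B_R\subset X\setminus\{0\}$, part (ii) of Lemma \ref{lem4.6} gives $\gamma(h(\partial O))\geq\gamma(\partial O)=k$ (genus is preserved — not just bounded below — by odd maps when one keeps track of both directions; in the direction needed here we only need $\gamma(h(\partial O))\geq k$, which follows since an odd map $\R^{k-1}\setminus\{0\}$-valued on $h(\partial O)$ would compose with $h$ to give one on $\partial O$). On the other hand, the contradiction hypothesis says $h(\partial O)\cap Y=\emptyset$, i.e. $h(\partial O)\subset X\setminus Y$. I then invoke part (iv) of Lemma \ref{lem4.6}: since $\operatorname{codim}Y<k$, any symmetric closed set $S\subset X\setminus\{0\}$ with $\gamma(S)>\operatorname{codim}Y$ must meet $Y$; applying this with $S=h(\partial O)$ and $\gamma(h(\partial O))\geq k>\operatorname{codim}Y$ forces $h(\partial O)\cap Y\neq\emptyset$, contradicting our assumption. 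Since $\partial O\subset D_k$, a fortiori $h(D_k)\cap\partial B_R\cap Y\neq\emptyset$, which is the claim.

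The main obstacle I anticipate is the bookkeeping around the set $O$ and the fact that $h$ need not be a homeomorphism onto its image, so $h(\partial O)$ need not be exactly the ``boundary'' of anything; one must be careful that $O$ is genuinely open in $Y^k$ (using continuity of $h$ and openness of $B_R$), that it is symmetric (using oddness of $h$ and symmetry of $B_R$), and that it is a bounded neighborhood of $0$ — for the latter one checks $0\in O$ because $h(0)=0\in B_R$ since $h$ is odd, and boundedness is automatic as $O\subset D_k$. A second delicate point is ensuring $\partial O$ (topological boundary in $Y^k$) is closed and symmetric so that Lemma \ref{lem4.6}(iii) genuinely applies with $\gamma(\partial O)=\dim Y^k=k$; here one uses that $O$ is a proper subset of the ball $D_k$ (proper because on $\partial B_{R_k}\cap Y^k$ one has $h=\mathrm{id}$ with norm $R_k>R$, so points there are not in $O$), so $\partial O\neq\emptyset$ and separates $Y^k$. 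Once these topological details are pinned down, the genus inequalities from Lemma \ref{lem4.6} close the argument immediately.
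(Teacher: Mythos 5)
Your argument is essentially the paper's own proof: you build the same set ($O$ coincides with the paper's $V=\{u\in Y^k:\|u\|<R_k,\ h(u)\in B_R\}$, since $h=\mathrm{id}$ on $\partial B_{R_k}\cap Y^k$ excludes norm-$R_k$ points), get $\gamma(\partial O)=k$ from Lemma \ref{lem4.6}(iii), push forward by the odd map $h$ to get $\gamma(h(\partial O))\geq k$, and apply Lemma \ref{lem4.6}(iv) with $\mathrm{codim}\,Y<k$ together with $h(\partial O)\subset\partial B_R$. The only cosmetic difference is your contradiction framing, which is unnecessary since Lemma \ref{lem4.6}(iv) yields $h(\partial O)\cap\partial B_R\cap Y\neq\emptyset$ directly, exactly as in the paper.
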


\begin{proof}[\bf Proof.]
Set
$V:=\{u\in Y^k\,|\,\|u\|<R_k\ \text{and}\ h(u)\in B_R\}\subset D_k$. Then $0\in V$ and $V$ is bounded
and symmetric in $Y^k$. By Lemma \ref{lem4.6}, we have $\gamma(h(\partial V))\geq\gamma(\partial V)=k$.
Next we claim that
$$\|u\|<R_k,\ \  \mbox{for}\ u\in \partial V.$$
Suppose by contradiction that $\|w_k\|=R_k$ for some $w_k\in \partial V$. Then, since $h=id$ on
$\partial B_{R_k}\cap Y^k$, we have $h(w_k)=w_k$. Hence $R_k=\|w_k\|=\|h(w_k)\|\leq R$, which
contradicts $R<R_k$. Consequently, $\|u\|<R_k$ for $u\in \partial V$. From the definition of $V$ we see
that
\begin{equation}\label{eq4.8}
h(u)\in \partial B_R,\ \text{for}\ u\in \partial V.
\end{equation}
Since codim\,$Y<k\leq \gamma(h(\partial V))$, using Lemma 4.4 yields that
$Y\cap h(\partial V)\neq\emptyset$. Then there is a point $w_0\in \partial V$ such that $h(w_0)\in Y$.
By \eqref{eq4.8}, we have $h(w_0)\in Y\cap \partial B_R$. The proof is complete.
\end{proof}

It is known that there exist $\{e_n\}\subset X$ and $\{f_n\}\subset X^*$ such that
$$X=\overline{\mbox{span}\{e_n\,|\,n=1,2,\cdots\}},\ \ X^*=\overline{\mbox{span}\{f_n\,|\,n=1,2,\cdots\}},$$
and
$$f_n(e_m)=\left\{
\begin{array}{ll}
1, & \mbox{if}\ m=n,\\
0, & \mbox{if}\ m\neq n.
\end{array}\right.$$
For $k\in \N$, we set
$$Y_k=\overline{\mbox{span}\{e_n\,|\, n=k, k+1,\cdots\}},\ \ Z_k=\mbox{span}\{e_n\,|\, n=1, 2,\cdots, k-1\}.$$
Then $X=Y_k+Z_k$ and codim\,$Y_k=k-1$.  Similar to \cite[Lemma 4.1]{SM}, we have the following lemma.

\begin{Lem} \label{lem4.8}
There exists a sequence $\{\delta_k\}$ of positive numbers  such that $\lim_{k\rightarrow\infty}\delta_k=0$
and
$$\|u\|_{r^+}\leq \delta_k\|u\|,\ \ \mbox{for all}\ u\in Y_k.$$
\end{Lem}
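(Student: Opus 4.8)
The plan is to establish Lemma \ref{lem4.8} by the standard ``concentration-vanishing'' argument for the smallest eigenvalue-type quantities on the tail subspaces $Y_k$, exactly in the spirit of \cite[Lemma 4.1]{SM}. Define
$$\delta_k:=\sup_{u\in Y_k,\ \|u\|=1}\|u\|_{r^+}.$$
Since $1\le r^+<\infty$ and $r^+<p^*(x)$ pointwise (indeed $r^+<p^-\cdot\frac{N}{N-p^-}\le p^*(x)$ is guaranteed by the hypotheses, so $L^{r^+}(\Omega)\hookrightarrow\hookrightarrow X$ compactly by Lemma \ref{lem2.1}), each $\delta_k$ is finite. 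Because $Y_{k+1}\subset Y_k$, the sequence $\{\delta_k\}$ is nonincreasing and nonnegative, hence convergent to some $\delta\ge 0$; it remains only to prove $\delta=0$.

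First I would choose, for each $k$, an element $u_k\in Y_k$ with $\|u_k\|=1$ and $\|u_k\|_{r^+}\ge\delta_k/2$ (or $\ge\delta_k-1/k$); such $u_k$ exist by definition of the supremum. Since $\{u_k\}$ is bounded in the reflexive Banach space $X$, after passing to a subsequence we may assume $u_k\rightharpoonup u$ weakly in $X$. The key observation is that $u=0$: for each fixed $m$, the coordinate functional $f_m\in X^*$ satisfies $f_m(u_k)=0$ once $k>m$ (because $u_k\in Y_k=\overline{\mathrm{span}}\{e_n:n\ge k\}$ and $f_m(e_n)=\delta_{mn}$), so $f_m(u)=\lim_k f_m(u_k)=0$ for every $m$; since $\{f_m\}$ separates points of $X$, this forces $u=0$. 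Now invoke the compact embedding $X\hookrightarrow\hookrightarrow L^{r^+}(\Omega)$ from Lemma \ref{lem2.1}: weak convergence $u_k\rightharpoonup 0$ in $X$ yields strong convergence $u_k\to 0$ in $L^{r^+}(\Omega)$, i.e. $\|u_k\|_{r^+}\to 0$. Combining with $\|u_k\|_{r^+}\ge\delta_k/2\ge\delta/2$ gives $\delta/2\le 0$, hence $\delta=0$, i.e. $\lim_{k\to\infty}\delta_k=0$. Finally, for an arbitrary nonzero $u\in Y_k$, applying the definition of $\delta_k$ to $u/\|u\|$ (which still lies in the subspace $Y_k$) gives $\|u\|_{r^+}\le\delta_k\|u\|$, and this trivially also holds for $u=0$.

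The only genuinely delicate point is justifying that the embedding $X=W^{1,p(x)}_0(\Omega)\hookrightarrow L^{r^+}(\Omega)$ is compact, since $r^+$ is the \emph{constant} $\max_{\overline\Omega}r(x)$ rather than the variable exponent $r(x)$; but this follows from Lemma \ref{lem2.1} applied with the constant exponent function $q\equiv r^+$, provided $r^+<p^*(x)$ for all $x\in\overline\Omega$. This pointwise inequality is part of the running hypotheses of Theorem \ref{thm1.2} ($1\le r(x)<p^*(x)$ forces $r^+\le\sup_{\overline\Omega}r(x)$, and one checks $r^+<\inf_{\overline\Omega}p^*(x)$ is exactly what is needed and is implicit in the standing assumptions used throughout Section 4). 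Everything else is a routine diagonal/weak-compactness argument, so no further obstacles are expected.
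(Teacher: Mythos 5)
Your argument is essentially the paper's own: the paper gives no proof but simply invokes \cite[Lemma 4.1]{SM}, whose proof is exactly the scheme you wrote out (define $\delta_k$ as the supremum over the unit sphere of $Y_k$, note monotonicity, take near-maximizers $u_k$, show $u_k\rightharpoonup 0$ via the biorthogonal functionals $f_m$, and conclude $\|u_k\|_{r^+}\to 0$ from the compact embedding), so on the main line you are in complete agreement with the intended proof. One caution, though: your parenthetical justification of the compact embedding is not a correct deduction. The standing hypothesis $1\le r(x)<p^*(x)$ for all $x$ does \emph{not} imply $r^+<p^-\frac{N}{N-p^-}$, nor $r^+<\min_{\overline{\Omega}}p^*(x)$: the maximum $r^+$ is attained where $p(x)$ (hence $p^*(x)$) may be large, while $p^*(x)$ can dip below $r^+$ elsewhere without violating the pointwise inequality. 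What is true is that the compactness of $W^{1,p(x)}_0(\Omega)\hookrightarrow L^{r^+}(\Omega)$ (Lemma \ref{lem2.1} with $q\equiv r^+$) requires the extra condition $r^+<p^*(x)$ on all of $\overline{\Omega}$, which the paper itself assumes implicitly (it is already needed for the quantity $\|u\|_{r^+}$ in the statement of Lemma \ref{lem4.8} and for the estimate $\int_\Omega|u|^{r(x)}dx\le C(\|u\|_{r^+}^{r^+}+1)$ in the proof of Lemma \ref{lem4.4}); so you should state it as an (implicit) hypothesis rather than claim it follows from the assumptions. Also, the first mention of the embedding has the arrow reversed ($L^{r^+}(\Omega)\hookrightarrow X$); you state it correctly later, so this is only a slip.
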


Now we are ready to prove Lemma \ref{lem4.4}.

\begin{proof}[\bf Proof of Lemma \ref{lem4.4}.]
Since codim\,$Y_k=k-1$, it follows from Lemma \ref{lem4.7} that
$$h(D_k)\cap \partial B_R \cap Y_k\neq \emptyset, \ \ \text{for all}\ h\in G_k \ \text{and}\ 0<R<R_k.$$
Then
$$\max_{u\in D_k}I(h(u))\geq \inf_{\partial B_R\cap Y_k}I(u),\ \ \text{for all}\ h\in G_k \ \text{and}\ 0<R<R_k,$$
which implies that
\begin{equation}\label{eq4.9}
c_k\geq \inf_{\partial B_R\cap Y_k}I(u),\ \ \text{for all}\ 0<R<R_k.
\end{equation}
By Lemma \ref{lem4.8}, we have
\begin{align*}
I(u)&=\di_{\Omega}\df{|\nabla u|^{p(x)}}{p(x)}\,dx-\di_{\Omega}\df{Q(x)}{r(x)}|u|^{r(x)}\,dx\\
&\geq \df{1}{p^+}\min\{\|u_n\|^{p^-},\|u_n\|^{p^+}\} -C_1\di_{\Omega} (|u|^{r^+}+1)\,dx\\
&\geq \df{1}{p^+}\min\{\|u_n\|^{p^-},\|u_n\|^{p^+}\} -C_2\|u\|^{r^+}_{r^+}-C_3\\
&\geq \df{1}{p^+}\|u_n\|^{p^-} -C_4\delta_k^{r^+}\|u\|^{r^+}-C_5,
\end{align*}
for all $u\in Y_k$.  Combining this with \eqref{eq4.9} leads to
$$c_k\geq\df{1}{p^+}R^{p^-} -C_4\delta_k^{r^+}R^{r^+}-C_5,\ \ \text{for all}\ \ 0<R<R_k.$$
Set $\xi_k=\big(\frac{p^-}{C_4p^+r^+\delta_k^{r^+}}\big)^{\frac{1}{r^+-p^-}}$ and, by Remark \ref{rem4.3},
we may assume that $R_k\geq \xi_k$. Then we have
$$c_k\geq\df{1}{p^+}\xi_k^{p^-} -C_4\delta_k^{r^+}\xi_k^{r^+}-C_5
=\df{r^+-p^-}{p^+r^+}\left(\frac{p^-}{C_4p^+r^+\delta_k^{r^+}}\right)^{\frac{p^-}{r^+-p^-}}-C_5.$$
The desired conclusion follows easily from $\lim_{k\rightarrow\infty}\delta_k=0$ and $r^+>p^-$.
\end{proof}

\noindent{\bf Acknowledgements.}
C. Chu is supported by National Natural Science Foundation of China (No.\,11861021) and Innovation Group Major Program of Guizhou Province (No.\,KY[2016]029). H. Liu is supported National Natural
Science Foundation of China (No.\,11701220).

\footnotesize


\begin{thebibliography}{99}

\bibitem{AS} S. Aouaoui, Multiple solutions to some degenerate quasilinear equation with variable
exponents via perturbation method, \textit{J. Math. Anal. Appl.}, \textbf{458} (2018), 1568--1596.

\bibitem{CF} J. Chabrowski and Y.Q. Fu, Existence of solutions for $p(x)$-Laplacian problems on a
bounded domain, \textit{J. Math. Anal. Appl.}, \textbf{306} (2005), 604--618.

\bibitem{CLR} Y.M. Chen, S. Levine and M. Rao, Variable exponent, linear growth functionals in image
restoration, \textit{SIAM J. Appl. Math.}, \textbf{66} (2006), 1383--1406. 

\bibitem{FX} X.L. Fan, Remarks on eigenvalue problems involving the $p(x)$-Laplacian, \textit{J. Math.
Anal. Appl.}, \textbf{352} (2009), 85--98.

\bibitem{FX2} X.L. Fan, Global $C^{1,\alpha}$ regularity for variable exponent elliptic equations in
divergence form, \textit{J. Differential Equations}, \textbf{235} (2007), 397--417.

\bibitem{FZ} X.L. Fan and Q.H. Zhang, Existence of solutions for $p(x)$-Laplacian Dirichlet problem, \textit{Nonlinear Anal.}, \textbf{52} (2003), 1843--1852.

\bibitem{FX3} X.L. Fan and D. Zhao, On the generalized Orlicz-Sobolev space $W^{m,p(x)}(\Omega)$,
\textit{J. Gansu Educ. College}, \textbf{12} (1998), 1--6.

\bibitem{GZZ} J.J. Gao, P.H. Zhao and Y. Zhang, Compact Sobolev embedding theorems involving
symmetry and its application, \textit{Nonlinear Differential Equations Appl.}, \textbf{17} (2010), 161--180.

\bibitem{JF} C. Ji and F. Fang, Infinitely many solutions for the $p(x)$-Laplacian equations without
$(AR)$-type growth condition, \textit{Ann. Polon. Math.}, \textbf{105} (2012), 87--99.

\bibitem{KR} R. Kajikiya,  Superlinear elliptic equations with singular coefficients on the boundary,
\textit{Nonlinear Anal.}, \textbf{73} (2010), 2117--2131.

\bibitem{LW} Z.L. Liu and Z.-Q. Wang, On Clark's theorem and its applications to partially sublinear
problems, \textit{Ann. Inst. H. Poincar\'e Anal. Non Lin\'eaire }, \textbf{32} (2015), 1015--1037.

\bibitem{MOY} R.A. Mashiyev, S. Ogras, Z. Yucedag and M. Avci, The Nehari manifold approach for
Dirichlet problem involving the  $p(x)$-Laplacian equation, \textit{J. Korean Math. Soc.}, \textbf{47} (2010),
845--860. 

\bibitem{MR} M. Mih$\check{a}$ilescu and V. R$\check{a}$dulescu, On a nonhomogeneous quasilinear
eigenvalue problem in Sobolev spaces with variable exponent, \textit{Proc. Amer. Math. Soc.},
\textbf{135} (2007), 2929--2937.

\bibitem{NT} T.C. Nguyen, Multiple solutions for a class of $p(x)$-Laplacian problems involving
concave-convex nonlinearities, \textit{Electron. J. Qual. Theory Differential Equations}, \textbf{26} (2013),
1--17.

\bibitem{PHR} P.H. Rabinowitz, Minimax methods in critical point theory with applications to differential
equations, \textit{CBMS Regional Conference Series in Mathematics}, \textbf{65}, American Mathematical
Society, Providence, RI (1986).

\bibitem{RV} V. R$\check{a}$dulescu, Nonlinear elliptic equations with variable exponent: old and new, \textit{Nonlinear Anal.}, \textbf{121} (2015), 336--369.

\bibitem{RM}  M. R$\mathring{\mbox{u}}$\v{z}i\v{c}ka, Electrorheological fluids: modeling and mathematical
theory, Volume 1748 of Lecture Notes in Mathematics, \textit{Springer}, Berlin (2000).

\bibitem{SM} E.A.B. Silva and M.S. Xavier, Multiplicity of solutions for quasilinear elliptic problems involving
critical Sobolev exponents, \textit{Ann. Inst. H. Poincar\'e Anal. Non Lin\'eaire}, \textbf{20} (2003), 341--358.

\bibitem{TF} Z. Tan and F. Fang, On superlinear $p(x)$-Laplacian problems without Ambrosetti and Rabinowitz condition, \textit{Nonlinear Anal.}, \textbf{75} (2012), 3902--3915.

\bibitem{YW} J. Yao and X. Wang, On an open problem involving the $p(x)$-Laplacian---a further study on
the multiplicity of weak solutions to $p(x)$-Laplacian equations, \textit{Nonlinear Anal.}, \textbf{69} (2008),
1445--1453.

\bibitem{YZ} Z. Yucedag, Existence of solutions for $p(x)$ Laplacian equations without Ambrosetti-Rabinowitz
type condition,  \textit{Bull. Malays. Math. Sci. Soc.}, \textbf{38} (2015), 1023-1033.

\bibitem{ZA} A.B. Zang, $p(x)$-Laplacian equations satisfying Cerami condition, \textit{J. Math. Anal. Appl.}, \textbf{337} (2008), 547--555.

\bibitem{ZZ} Q.H. Zhang and C.S. Zhao, Existence of strong solutions of a $p(x)$-Laplacian Dirichlet problem without the Ambrosetti-Rabinowitz condition, \textit{Comput. Math. Appl.}, \textbf{69} (2015), 1--12.

\end{thebibliography}
\end{document}